\def\be{\begin{equation}}
\def\ee{\end{equation}}
\def\bse{\begin{subequations}}
\def\ese{\end{subequations}}
\let\er\eqref
\let\be\beta
\newcommand{\R}{{\mathbb R}}
\newtheorem{theorem}{Theorem}
\newtheorem{lemma}[theorem]{Lemma}
\newtheorem{proposition}[theorem]{Proposition}
\newtheorem{remark}[theorem]{Remark}
\newtheorem{corollary}[theorem]{Corollary}
\newtheorem{definition}[theorem]{Definition}
\def\bse{\begin{subequations}}
\def\ese{\end{subequations}}
\title{The aggregation-diffusion equation with energy critical exponent}
\author{ Shen Bian\footnote{Corresponding author. Beijing University of Chemical Technology, 100029, Beijing. Email: \texttt{bianshen66@163.com}. } }
\date{}
\begin{document}
\let\cleardoublepage\clearpage

\maketitle

\begin{abstract}
  We consider a Keller-Segel model with non-linear porous medium type diffusion and nonlocal attractive power law interaction, focusing on potentials that are less singular than Newtonian interaction. Here, the nonlinear diffusion is chosen to be $m=\frac{2d}{d+2s}$ in such a way that the associated free energy is conformal invariant and there is a family of stationary solutions $U(x)=c\left(\frac{\lambda}{\lambda^2+|x-x_0|^2}\right)^{\frac{d+2s}{2}}$ for any constant $c$ and some $\lambda>0, x_0 \in \R^d.$ We analyze under which conditions on the initial data the regime that attractive forces are stronger than diffusion occurs and classify the global existence and finite time blow-up of dynamical solutions by virtue of stationary solutions. Precisely, solutions exist globally in time if the $L^m$ norm of the initial data $\|u_0\|_{L^m(\R^d)}$ is less than the $L^m$ norm of stationary solutions $\|U(x)\|_{L^m(\R^d)}$. Whereas there are blowing-up solutions for $\|u_0\|_{L^m(\R^d)}>\|U(x)\|_{L^m(\R^d)}$.
\end{abstract}
\noindent
{\it \footnotesize \textbf{Key words.}} {\footnotesize 
Global existence; Finite time blow-up; Steady states; Critical exponent}

\section{Introduction}

In the present paper, we analyse qualitative properties of nonnegative solutions for the aggregation-diffusion equation
\begin{align}\label{fracks}
\left\{
      \begin{array}{ll}
      u_t = \Delta u^m-\nabla \cdot \left(u \nabla c\right), ~~& x\in \R^{d},~t> 0,\\
     u(x,0) =u_{0}(x) \ge 0, ~~& x\in \R^d.
      \end{array}\right.
\end{align}
Here the diffusion exponent is taken to be $m=\frac{2d}{d+2s}$ for $2<2s<d$ with $d \ge 3.$ $u$ represents the density of cells, the chemoattractant $c$ is governed by a diffusion process and can be expressed by the convolution of Riesz potential and $u(x,t)$, that is
\begin{align}\label{CC}
c(x,t) = \frac{1}{d-2s} \int_{\mathbb R^d} \frac{u(y,t)}{|x-y|^{d-2s}} dy.
\end{align}
The parabolic equation on $u$ gives nonnegative solutions $u(x,t)\ge 0, c(x,t)\ge 0$. Another property we will use is the conservation of the total number of cells
\begin{align}
M:=\int_{\R^d} u_0(x)dx=\int_{\R^d}u(x,t) dx.
\end{align}
Initial data will be assumed throughout this paper
\begin{align}\label{12}
u_0 \in L_+^1 \cap L^\infty(\R^d), \quad \int_{\R^d} |x|^2 u_0(x) dx<\infty, \quad \nabla u_0^m \in L^2(\R^d).
\end{align}

The nonlinear diffusion $m>1$ models the local repulsion of cells while the attractive nonlocal term models cell movement toward chemotactic sources or attractive interaction between cells due to cell adhesion \cite{BL98,KS70,PB15,FW03,MT15,GC08}. The main characteristic of equation \er{fracks} is the competition between the diffusion and the nonlocal aggregation. This is well presented by the free energy
\begin{align}\label{Fu}
F(u)=\frac{1}{m-1} \int_{\R^d} u^m dx-\frac{1}{2(d-2s)} \iint_{\R^d \times \R^d} \frac{u(x,t)u(y,t)}{|x-y|^{d-2s}} dxdy.
\end{align}
The competition between these two terms leads to finite-time blow-up and infinite time existence. As a matter of fact, \er{fracks} can be recast as
\begin{align}\label{chemical}
u_t=\nabla \cdot \left( u\nabla \mu  \right),
\end{align}
where $\mu$ is the chemical potential
\begin{align}
\mu=\frac{m}{m-1}u^{m-1}-c.
\end{align}
Multiplying \er{chemical} by $\mu$ and integrating it in space one obtains
\begin{align}
\frac{d F(u)}{dt}+\int_{\R^d} u|\nabla \mu|^2 dx=0.
\end{align}
This implies that $F[u(\cdot,t)]$ is non-increasing with respect to time.

Notice that equation \er{fracks} possesses a scaling invariance which leaves the $L^p$ norm invariant and produces a balance between the diffusion and the aggregation terms where
\begin{align}
p=\frac{d(2-m)}{2s}=m=\frac{2d}{d+2s}.
\end{align}
Indeed, if $u(x,t)$ is a solution to \er{fracks}, then $u_\lambda(x,t)=\lambda u \left( \lambda^{\frac{2-m}{2s}}x,\lambda^{m-1+\frac{2-m}{s}}t\right)$ is also a solution and this scaling preserves the $L^p$ norm $\|u_\lambda\|_{L^p(\R^d)}=\|u\|_{L^p(\R^d)}.$ For the critical case $m_c=2-2s/d$, the above scaling becomes the mass invariant scaling $u_\lambda(x,t)=\lambda u \left( \lambda^{1/d}x,\lambda^{1+\frac{2-2s}{d}}t\right)$. For the subcritical case $m>2-2s/d$, the aggregation dominates the diffusion for low density and prevents spreading. While for high density, the diffusion dominates the aggregation thus blow-up is precluded. On the contrary, for the supercritical case $1 \le m<2-2s/d,$ the aggregation dominates the diffusion for high density (large $\lambda$) and the density may have finite-time blow-up. While for low density (small $\lambda$), the diffusion dominates the aggregation and the density has infinite-time spreading. These behaviors also appear in many other physical systems such as thin film, Hele-Shaw, Stellar collapse as well as the nonlinear Schr\"{o}dinger equation \cite{BC99,Chandra,Wei83,Tom04}.

Our main goal in this paper is to classify the global existence and finite time blow-up of solutions by the initial data. The main tool for the analysis of \er{fracks} is the steady state which can be interpreted in the distributional sense
\begin{align}\label{steadystates}
\left\{
      \begin{array}{ll}
       \frac{m}{m-1}U_s^{m-1}-C_s=\overline{C}, ~~\mbox{in} ~~\Omega, \\
        U_s=0 \mbox{~ in ~} \R^d \setminus \Omega, \quad U_s>0 \mbox{ in } \Omega, \\
        C_s = \frac{1}{d-2s} \int_{\mathbb R^d} \frac{U_s(x)}{|x-y|^{d-2s}} dy, ~~\mbox{in}~~ \R^d.
      \end{array}\right.
\end{align}
Here $\overline{C}$ is any constant chemical potential and $\Omega=\{x \in \R^d ~\big|~ U_s(x)>0 \}$ is a connected open set in $\R^d$. Therefore, let us begin with the analysis of stationary states. We firstly stress that a lot of efforts have been put to investigate the steady states of \er{fracks} \cite{CCH17,CCH21,CHVY19,CHM18,CGH20}.

For $m>2-2s/d,$ this is the diffusion-dominated regime. There exists a unique up to translations compactly supported steady state to \er{fracks}. Moreover, such steady state is radially decreasing and it is the global minimiser of the energy functional $F(u).$ We refer to \cite{CHM18} for details. In the local case $s=1,$ the Riesz kernel $\frac{1}{|x|^{d-2s}}$ is replaced by Newtonian interaction for $d \ge 3$ and log interaction for $d=2.$ Analogous results for stationary states are provided in \cite{CHVY19,kimyao} for $m>2-2/d~(d \ge 3)$ and \cite{CCV15} for $m>1~(d=2).$

In the case $m=2-2s/d,$ this is the fair competition regime where the nonlinear diffusion and the nonlocal aggregation balance. The existence of the stationary states can only happen for a critical mass $M_c$ which is characterised by the optimal constant of a variant of the HLS inequality. $M_c$ turns out to be the only value of the mass for which the free energy $F(u)$ has minimizers and these minimizers are shown to be compactly supported, radially symmetric and non-increasing stationary solutions of equation \er{fracks}. See for instance \cite{CCH17} and the references therein. In the local case $s=1$, a clear dichotomy arises, that's, $M_c$ is the sharp critical mass such that if the initial mass $m_0<M_c,$ solutions of \er{fracks} exist globally in time, whereas there are blowing-up solutions for $m_0>M_c$. See \cite{BJ09} for the case $m=2-2/d~(d \ge 3)$ and \cite{BCM08,bdp06} for the case $m=1 ~(d=2)$. Other similar results for stationary states are given in the above two references as well.

For $1<m<2-2s/d,$ this is the aggregation-dominated regime. We will here focus on the case $m=\frac{2d}{d+2s}<2-\frac{2s}{d}$ and there are two reasons for us to concentrate on the special case $m=\frac{2d}{d+2s}$. On one hand, there are three subcases for stationary states in terms of $m$. For $\frac{2d}{d+2s}<m<2-\frac{2s}{d},$ all the stationary states are compactly supported. While they are not compactly supported for $1<m \le \frac{2d}{d+2s}$ and there is an explicit formula of one parameter family of stationary states in the case $m=\frac{2d}{d+2s}$ \cite{CLO06}. See Section \ref{Steady} for more details. Consequently, $m=\frac{2d}{d+2s}$ is the critical exponent separating steady states that have compactly support from those that do not. On the other hand, the free energy $F(u)$ is invariant under translations for $m=\frac{2d}{d+2s}.$ To be honest, plugging the invariant scaling $u_\lambda(x)=\lambda u\left( \lambda^{\frac{2-m}{2s}}x \right)$ into the free energy $F(u)$ we obtain
\begin{align}
F(u_\lambda)=\lambda^{\frac{(d+2s)m-2d}{2s}} F(u),
\end{align}
the free energy is invariant when $m=\frac{2d}{d+2s}$. For this reason $m$ is endowed with the energy critical exponent and denoted as $m_{ec}:=\frac{2d}{d+2s}.$

Indeed, for $m=\frac{2d}{d+2s},$ we can deduce that $\overline{C}=0$ and $\Omega=\R^d$ in \er{steadystates}, see Proposition \ref{Fusteady}. Hence, \er{steadystates} can be written as the following integral equation
\begin{align}\label{starstar}
C_s(x)& =\frac{1}{d-2s}\left( \frac{m-1}{m} \right)^{\frac{1}{m-1}} \int_{\R^d} \frac{C_s(y)^{\frac{1}{m-1}}}{|x-y|^{d-2s}} dy \nonumber \\
& =\frac{1}{d-2s}\left( \frac{d-2s}{2d} \right)^{\frac{d+2s}{d-2s}} \int_{\R^d} \frac{C_s(y)^{\frac{d+2s}{d-2s}}}{|x-y|^{d-2s}} dy.
\end{align}
It is proved by Chen and Li \cite{CLO06,CLO1,CLO2} that every positive solution $C_s$ of the integral equation \er{starstar} is radially symmetric and decreasing about some point $x_0$ and therefore assumes the form
\begin{align}\label{Cs}
C_s(x)=A \left( \frac{\lambda}{\lambda^2+|x-x_0|^2} \right)^{\frac{d-2s}{2}}
\end{align}
with some positive constants $A$ and $\lambda.$ Thus, we conclude that \er{fracks} uniquely admits a one-parameter family of stationary states
\begin{align}\label{Us}
U_s(\lambda)=B \left( \frac{\lambda}{\lambda^2+|x-x_0|^2} \right)^{\frac{d+2s}{2}}
\end{align}
with some point $x_0 \in \R^d$ and positive constants $B, \lambda$.
\begin{remark}
\begin{enumerate}
  \item The integral equation \er{starstar} is also closely related to the following family of semi-linear partial differential equations
      \begin{align}\label{230212}
       (-\Delta)^s w=w^{\frac{d+2s}{d-2s}}.
      \end{align}
      Apparently, equation \er{230212} is also of practical interest and importance. For instance, it arises as the Euler-Lagrange equation of the functional \cite{CLO06}
\begin{align}
I(w)=\frac{\int_{\R^d} \left| (-\Delta)^{\frac{s}{2}} w \right|^2 dx}{\left( \int_{\R^d} |w|^{\frac{2d}{d-2s}} dx \right)^{\frac{d-2s}{d}}}.
\end{align}
The classification of the solutions $C_s$ provides the best constant in the inequality of the critical sobolev embedding from $H^s(\R^d)$ to $L^{\frac{2d}{d-2s}}(\R^d)$:
\begin{align}
\left( \int_{\R^d} |w|^{\frac{2d}{d-2s}}dx \right)^{\frac{d-2s}{d}} \le \bar{K} \int_{\R^d} \left| (-\Delta)^{\frac{s}{2}} w \right|^2 dx
\end{align}
by $\bar{K}=\frac{\|(-\Delta)^{s/2} C_s \|_{L^2(\R^d)}^2}{ \|C_s\|_{L^{2d/(d-2s)}(\R^d)}^2}.$
  \item It is shown that all the solutions of partial differential equation \er{230212} satisfy the integral equation \er{starstar} and vise versa. Therefore, to classify the solutions of the partial differential equations, we only need to work on the integral equations \er{starstar}.
  \item Note that the second moment of $U_s$ is bounded for $s>1.$
\end{enumerate}
\end{remark}

We remark that the stationary solutions $U_s(\lambda)$ \er{Us} is the extremum for the HLS inequality \cite{lieb202} which states as follows.
\begin{lemma}(HLS inequality)
Let $q,q_1>1, d \ge 3$ and $0<\beta<d$ with $1/q+1/q_1+\beta/d=2.$ Assume $f \in L^q(\R^d)$ and $h \in L^{q_1}(\R^d)$. Then there exists a sharp constant $C(d,\beta,q)$ independent of $f$ and $h$ such that
\begin{align}\label{fh}
\left| \iint_{\R^d \times \R^d} \frac{f(x)h(y)}{|x-y|^{\beta}} dxdy   \right| \le C(d,\beta,q) \|f\|_{L^q(\R^d)} \|h\|_{L^{q_1}(\R^d)}.
\end{align}
If $q=q_1=\frac{2d}{2d-\beta},$ then
\begin{align}
C(d,\beta,q)=C(d,\beta)=\pi^{\beta/2} \frac{\Gamma\left( d/2-\beta/2 \right)}{\Gamma\left( d-\beta/2 \right)} \left( \frac{\Gamma(d/2)}{\Gamma(d)} \right)^{-1+\beta/d}.
\end{align}
In this case there is equality in \er{fh} if and only if $h \equiv (const.)f$ and
\begin{align}
f(x)=A \left( \gamma^2+|x-x_0|^2 \right)^{-(2d-\beta)/2}
\end{align}
for some $A>0, \gamma \in \R$ and $x_0 \in \R^d.$ Particularly, for $\beta=d-2s,$
\begin{align}\label{fhds}
f=U_s(\lambda)~~\mbox{for some}~~\lambda>0.
\end{align}
\end{lemma}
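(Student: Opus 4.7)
The plan is to prove the sharp HLS inequality following Lieb's classical approach, splitting the argument into a boundedness step and a sharp-constant step. For the mere boundedness \er{fh} with a finite (not necessarily sharp) constant, I would combine Young's convolution inequality in weak-$L^p$ spaces with Marcinkiewicz interpolation: since the Riesz kernel $|x|^{-\beta}$ lies in weak-$L^{d/\beta}(\R^d)$, the weak Young inequality shows that $I_\beta f(x):=\int |x-y|^{-\beta} f(y)\,dy$ maps $L^q$ into weak-$L^{q_1'}$ for admissible exponents, and real interpolation between two such endpoints upgrades this to the strong bound, yielding \er{fh} by duality.

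To obtain the sharp constant in the diagonal case $q=q_1=2d/(2d-\beta)$, I would first use the Riesz rearrangement inequality to reduce to the situation where $f$ and $h$ are radially symmetric and non-increasing: rearrangement does not decrease the bilinear form $\iint f(x)h(y)|x-y|^{-\beta}\,dxdy$ but preserves the Lebesgue norms. Polarisation, together with the fact that the bilinear form is controlled by its diagonal value, lets me restrict to $h=f$. The key observation is the conformal invariance of the diagonal functional
\begin{align*}
J(f) := \frac{\iint_{\R^d \times \R^d} f(x) f(y) |x-y|^{-\beta}\,dxdy}{\|f\|_{L^{2d/(2d-\beta)}(\R^d)}^2}.
\end{align*}
Via inverse stereographic projection $\R^d \to S^d$, $J$ becomes invariant under the full conformal group, and I would transfer the problem to the sphere in order to exploit this non-compact symmetry group.

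The extremals are then identified by the method of competing symmetries: combine the two non-commuting actions of symmetric decreasing rearrangement in $\R^d$ and the pullback to $\R^d$ of rotations of $S^d$ via the stereographic projection centred at a different point. Starting from any trial function and alternating these two operations produces a sequence along which $J$ is non-decreasing and which converges strongly in $L^{2d/(2d-\beta)}(\R^d)$ to a common fixed point of both symmetries. The only such fixed points correspond to the constant on $S^d$, whose pullback is precisely the family $f(x)=A(\gamma^2+|x-x_0|^2)^{-(2d-\beta)/2}$ asserted in \er{fhds}. The explicit value of $C(d,\beta)$ is obtained by plugging the representative $f(x)=(1+|x|^2)^{-(2d-\beta)/2}$ into $J$ and computing both integrals via Beta function identities and standard $\Gamma$-function manipulations.

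The main obstacle is the competing-symmetries step: one must verify that the alternating iteration converges strongly (not merely weakly) in $L^{2d/(2d-\beta)}(\R^d)$ and that the joint fixed point set, modulo the conformal symmetry group, consists only of the asserted one-parameter family of conformal bubbles. This is where the rigidity of the equality case truly lies, since neither rearrangement nor conformal invariance alone singles out the explicit profile $U_s(\lambda)$.
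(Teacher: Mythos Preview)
Your sketch is essentially Lieb's classical argument via rearrangement and competing symmetries, and it is correct in outline. Note, however, that the paper does \emph{not} prove this lemma at all: it is simply quoted as a known result, with a citation to Lieb--Loss \cite{lieb202} immediately preceding the statement. So there is no ``paper's own proof'' to compare against --- you are supplying the argument that the paper outsources to the reference, and your approach is the one found there.

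One small remark on your sketch: the strong convergence in the competing-symmetries iteration is indeed the delicate point, and in a full write-up you would need to invoke the strict Riesz rearrangement inequality (to handle the equality case and force the limit to be a genuine extremiser) together with a compactness argument modulo the conformal group. Your proposal correctly flags this as the main obstacle, so nothing is missing at the level of a plan.
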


Recalling \er{steadystates} which reads as
\begin{align}
C_s(x)=\frac{m}{m-1} U_s^{m-1}(x),
\end{align}
the following equality from $F(u)$ holds:
\begin{align}\label{FUs}
F(U_s(x))& =\frac{1}{m-1}\| U_s \|_{L^m(\R^d)}^m-\frac{1}{2} \int_{\R^d} U_s C_s dx \nonumber \\
& = \frac{1}{m-1}\|U_s\|_{L^m(\R^d)}^m-\frac{m}{2(m-1)} \int_{\R^d} U_s^m dx \nonumber \\
&=\frac{2-m}{2(m-1)} \|U_s\|_{L^m(\R^d)}^m=\frac{2s}{d-2s}\|U_s\|_{L^m(\R^d)}^m.
\end{align}
Here we address that $\|U_s\|_{L^m(\R^d)}^m$ is a constant independent of $\lambda$ and $x_0.$ That is
\begin{align}\label{Lm}
\|U_s\|_{L^m(\R^d)}^m &=B^m \int_{\R^d} \left( \frac{\lambda}{\lambda^2+|x-x_0|^2} \right)^d dx \nonumber \\
&=B^m \pi^{\frac{d+1}{2}} 2^{1-d} \frac{1}{\Gamma \left( \frac{d+1}{2} \right)}.
\end{align}
Before proceeding further to show the dynamical behaviors of solutions, we define the weak solution which we will deal with in this paper.
\begin{definition}\label{weakdefine}(Weak and free energy solution)
Let $u_0$ be an initial condition satisfying \er{12} and $T \in (0,\infty].$
\begin{enumerate}
\item[\textbf{(i)}]
  A weak solution to \er{fracks} with initial data $u_0$ is a non-negative function $u \in L^\infty \left(0,T;L_+^1 \cap L^\infty(\R^d) \right)$ and for $\forall ~\psi \in C_0^\infty(\R^d)$ and $0<t<T$
 \begin{align}
 &\int_{\R^d} \psi u(\cdot,t)dx-\int_{\R^d} \psi u_0(x) dx =\int_0^t
 \int_{\R^d} \Delta \psi  u^m dx ds \nonumber \\
 & -\frac{1}{2} \int_0^t \iint_{\R^d\times \R^d}  \frac{[\nabla
 \psi(x)-\nabla \psi(y)] \cdot (x-y)}{|x-y|^2} \frac{u(x,s)
 u(y,s)}{|x-y|^{d-2s}} dxdy ds. \label{weak}
 \end{align}
\item[\textbf{(ii)}]
The weak solution $u$ is also a weak free energy solution to \er{fracks} if $u$ satisfies additional regularities that
  \begin{align}
   \nabla u^{m-1/2} \in L^2\left(0,T;L^2(\R^d)\right), \label{0201} \\
    u \in L^{3}\left(0,T;L^{\frac{3d}{d-2(1-2s)}}(\R^d) \right). \label{02011}
  \end{align}
Moreover, $F(u(\cdot,t))$ is a non-increasing function and satisfies
\begin{align}\label{Fuequality}
F[u(\cdot,t)]+\int_{0}^t \int_{\R^d} \left| \frac{2m}{2m-1}\nabla
u^{m-\frac{1}{2}}-\sqrt{u}\nabla c \right|^2dxds \leq
F(u_0)
\end{align}
  for all $t \in (0,T)$ with $c=\frac{1}{d-2s} \int_{\R^d} \frac{u(y)}{|x-y|^{d-2s}}dy.$
\end{enumerate}
\end{definition}
We emphasize that regularities \er{0201} and \er{02011} are enough to make sense of each term in \er{Fuequality}.
\begin{lemma}\label{uceps}
If $u \in L^{\frac{3d}{d-2(1-2s)}}(\R^d)$ and $c$ is expressed by \er{CC}, then
\begin{align}
\left\| u |\nabla c|^2 \right\|_{L^1(\R^d)} \leq C
\|u\|_{L^q(\R^d)}^3<\infty,~~q=\frac{3d}{d-2(1-2s)}\,.
\end{align}
\end{lemma}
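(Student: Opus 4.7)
The plan is to prove this by a standard Hölder plus Hardy--Littlewood--Sobolev argument. First I would observe that, differentiating under the integral in \er{CC}, one has
\begin{align*}
|\nabla c(x)| \leq \int_{\R^d} \frac{u(y)}{|x-y|^{d-2s+1}}\,dy = C\,(I_{2s-1} * u)(x),
\end{align*}
where $I_{2s-1}$ denotes the Riesz kernel of order $2s-1$. Since the assumption $2<2s<d$ gives $0<2s-1<d$, this is a genuine Riesz potential.

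Next, I would apply Hölder's inequality with exponents $q$ and $q/(q-1)$ to the integral
\begin{align*}
\int_{\R^d} u\,|\nabla c|^2\, dx \leq \|u\|_{L^q(\R^d)}\,\bigl\|\,|\nabla c|^2\,\bigr\|_{L^{q/(q-1)}(\R^d)} = \|u\|_{L^q(\R^d)}\,\|\nabla c\|_{L^{2q/(q-1)}(\R^d)}^2,
\end{align*}
and then use the HLS inequality (the mapping property $I_{2s-1}\colon L^{q}\to L^{r}$ with $\tfrac{1}{r}=\tfrac{1}{q}-\tfrac{2s-1}{d}$) to control $\|\nabla c\|_{L^{2q/(q-1)}}$ by $\|u\|_{L^q}$. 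For the two resulting powers of $\|u\|_{L^q}$ from HLS to combine with the Hölder factor into the advertised cube, I need the exponents to match:
\begin{align*}
\frac{q-1}{2q} = \frac{1}{q} - \frac{2s-1}{d},
\end{align*}
which after clearing denominators yields exactly $q(d-2+4s)=3d$, i.e.\ $q=\tfrac{3d}{d-2(1-2s)}$, matching the statement.

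Finally, I would check admissibility of the HLS indices: $q>1$ is equivalent to $d>2s-1$, which follows from $2s<d$; $q<\infty$ is trivial; and $2q/(q-1)<\infty$ follows from $q>1$, while the Riesz order $2s-1$ lies in $(0,d)$ as noted. The main (and only nontrivial) obstacle is really the bookkeeping of Hölder/HLS exponents to confirm that the critical $q$ appears; once that algebraic identity is established, the combination immediately gives the cubic bound $\|u|\nabla c|^2\|_{L^1}\leq C\|u\|_{L^q}^3$, and finiteness follows from the assumption $u\in L^{q}(\R^d)$.
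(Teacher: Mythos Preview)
Your proposal is correct and follows essentially the same route as the paper: H\"older to split off $\|u\|_{L^q}$, then a convolution inequality to bound $\|\nabla c\|_{L^{2q/(q-1)}}$ by $\|u\|_{L^q}$, with the exponent relation forcing $q=\frac{3d}{d-2(1-2s)}$. The only cosmetic difference is that the paper writes $\nabla c = C\,u * \frac{x}{|x|^{d+2-2s}}$ and invokes the weak Young inequality with $\frac{x}{|x|^{d+2-2s}}\in L_w^{d/(d+1-2s)}$, whereas you first take absolute values to get $|\nabla c|\le C\,I_{2s-1}*u$ and then cite the HLS mapping property $I_{2s-1}\colon L^q\to L^{2q/(q-1)}$; these are the same estimate in different packaging.
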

\begin{proof}
By the H\"{o}lder inequality we have
\begin{align}\label{qqp1}
\left\| u |\nabla c|^2 \right\|_{L^1(\R^d)} \leq \|u\|_{L^q(\R^d)} \left\||\nabla
c|^2 \right\|_{L^{q'}(\R^d)},  ~~\frac{1}{q}+\frac{1}{q'}=1.
\end{align}
Then by the weak Young inequality \cite[formula (9), pp.
107]{lieb202}, it follows that
\begin{align}
\left\||\nabla c|^2\right\|_{L^{q'}(\R^d)} & =\|\nabla c\|_{L^{2q'}(\R^d)}^2=C
\left\|u(x)*\frac{x}{|x|^{d+2-2s}}\right\|_{L^{2q'}(\R^d)}^2 \nonumber \\
& \leq C
\|u\|_{L^{q}(\R^d)}^2 \left\|\frac{x}{|x|^{d+2-2s}}
\right\|_{L_w^{\frac{d}{d+1-2s}}(\R^d)}^2\le C\|u\|_{L^{q}(\R^d)}^2  ,
\end{align}
where $1+\frac{1}{2q'}=\frac{1}{q}+\frac{d+1-2s}{d}.$ Combining with
\er{qqp1} follows $q=\frac{3d}{d-2(1-2s)}$ and completes the proof. $\Box$
\end{proof}

Consequently, from \er{02011} one has that
\begin{align}
\int_{0}^T \left\| \sqrt{u} \nabla c \right\|_{L^2(\R^d)}^2 dt \leq
C\int_{0}^T \left\|u \right\|_{L^{3d/(d-2(1-2s))}(\R^d)}^3 dt<\infty.
\end{align}
So the term $\int_0^t \int_{\R^d} \left|\sqrt{u} \nabla c\right|^2 dx ds$ in \er{Fuequality} makes sense.

Thanks to \er{Lm}, we are now ready to state our main result by employing the steady states of \er{fracks}.
\begin{theorem}\label{main}
Let $d \ge 3$ and $m=\frac{2d}{d+2s}.$ We assume that $u_0(x) \in L^1_+ \cap L^\infty(\R^d)$ and $\int_{\R^d} |x|^2 u_0(x) dx <\infty,$ $U_s$ is the steady state of \er{fracks}. Suppose $F(u_0)<F(U_s).$
\begin{enumerate}
  \item[\textbf{(i)}] If $\|u_0\|_{L^m (\R^d)}<\|U_s\|_{L^m (\R^d)},$ then there exists a global weak solution to \er{fracks} satisfying that for any $0<t<\infty,$
     \begin{align}
      \|u(\cdot,t)\|_{L^1 \cap L^\infty(\R^d)} \le C\left( \|u_0\|_{L^1 \cap L^\infty(\R^d)} \right).
     \end{align}
     Furthermore, the weak solution is also a weak free energy solution satisfying the energy inequality \er{Fuequality}.
  \item[\textbf{(ii)}] If $\|u_0\|_{L^m (\R^d)}>\|U_s\|_{L^m (\R^d)},$ then the weak solution of \er{fracks} blows up in finite time $T$ in the sense that
     \begin{align}
       \displaystyle \limsup_{t \to T} \|u(\cdot,t)\|_{L^q(\R^d)}=\infty,~~\mbox{for any}~~1<q \le \infty.
     \end{align}
\end{enumerate}
\end{theorem}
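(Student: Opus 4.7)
The plan is to combine the HLS inequality (which $U_s$ saturates) with the energy dissipation to obtain a dichotomy on $\|u(t)\|_{L^m}$, and then turn that dichotomy into either uniform a priori bounds (case (i)) or a virial obstruction to global existence (case (ii)).

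First I would set up the HLS lower bound on $F$. Applying the sharp HLS inequality \eqref{fh} with $\beta=d-2s$ and $q=q_1=m$ gives
\begin{equation*}
\iint_{\R^d\times\R^d}\frac{u(x)u(y)}{|x-y|^{d-2s}}\,dx\,dy \le C(d,d-2s)\,\|u\|_{L^m(\R^d)}^2,
\end{equation*}
with equality for $u=U_s$. Using \eqref{FUs} together with the relation $C_s=\tfrac{m}{m-1}U_s^{m-1}$ to evaluate the constant, this yields $F(u)\ge g(\|u\|_{L^m})$, where
\begin{equation*}
g(M)=\tfrac{1}{m-1}M^m-\tfrac{C(d,d-2s)}{2(d-2s)}M^2.
\end{equation*}
Since $1<m<2$, an elementary analysis shows $g$ has a unique maximum on $(0,\infty)$; the saturation by $U_s$ forces this maximum to be at $M^\ast=\|U_s\|_{L^m}$ with value $g(M^\ast)=F(U_s)$. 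Therefore the hypothesis $F(u_0)<F(U_s)$ means the sublevel set $\{M:g(M)\le F(u_0)\}$ is a disjoint union $[0,M_1]\cup[M_2,\infty)$ with $M_1<\|U_s\|_{L^m}<M_2$. Because $t\mapsto\|u(t)\|_{L^m}$ is continuous along a suitably regularized solution and $F(u(\cdot,t))\le F(u_0)$, $\|u(t)\|_{L^m}$ is trapped in the connected component containing $\|u_0\|_{L^m}$.

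For case (i), the trapping gives $\|u(t)\|_{L^m}\le M_1<\|U_s\|_{L^m}$ for all $t>0$. Combined with mass conservation this is the input for an $L^q$-bootstrap: I would test \eqref{fracks} against $u^{q-1}$, use the nonlocal interaction term bounded by HLS / weak-Young (as in Lemma 2) together with the uniform $L^m$ bound, and interpolate diffusion dissipation against Sobolev embeddings to absorb the drift. Iterating $q\to\infty$ \emph{à la} Moser produces the desired $L^\infty$ estimate, after which one constructs the weak solution through the standard approximation scheme (regularize the kernel, truncate the initial data, pass to the limit). The energy inequality \eqref{Fuequality} is inherited from the approximating sequence via lower semicontinuity, and the regularities \eqref{0201}--\eqref{02011} follow from the controlled diffusion dissipation plus the uniform $L^p$ bounds.

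For case (ii), the trapping yields the reverse bound $\|u(t)\|_{L^m}\ge M_2>\|U_s\|_{L^m}$. I would use the second-moment identity: multiplying \eqref{fracks} by $|x|^2$, integrating, and symmetrizing the nonlocal term in $x\leftrightarrow y$ gives
\begin{equation*}
\frac{d}{dt}\int_{\R^d}|x|^2u\,dx=2d\int_{\R^d}u^m\,dx-\iint_{\R^d\times\R^d}\frac{u(x)u(y)}{|x-y|^{d-2s}}\,dx\,dy.
\end{equation*}
Eliminating the double integral using $F(u)$ and then inserting $m-1=(d-2s)/(d+2s)$ collapses this to
\begin{equation*}
\frac{d}{dt}\int_{\R^d}|x|^2u\,dx=-4s\int_{\R^d}u^m\,dx+2(d-2s)F(u).
\end{equation*}
Because $\int u^m\,dx=\|u\|_{L^m}^m\ge M_2^m>\|U_s\|_{L^m}^m$ and $F(u)\le F(u_0)<F(U_s)=\tfrac{2s}{d-2s}\|U_s\|_{L^m}^m$ from \eqref{FUs}, the right-hand side is bounded above by a strictly negative constant $-\delta<0$. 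Since $\int|x|^2 u\,dx\ge0$, the solution cannot exist past $T\le\delta^{-1}\int|x|^2 u_0\,dx$, and a standard contradiction with the $L^q$ local existence theory forces $\limsup_{t\to T}\|u(\cdot,t)\|_{L^q}=\infty$.

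The main obstacle is the $L^q\to L^\infty$ bootstrap in case (i): the nonlocal drift in the $L^q$ energy identity must be controlled by the HLS / fractional Sobolev embedding with constants that stay uniform as $q\to\infty$, which is where the energy-critical choice $m=\tfrac{2d}{d+2s}$ and the $s>1$ smoothing of the Riesz kernel are used in an essential way. The virial part of case (ii) is, by contrast, algebraically direct once the trapping from the HLS-based dichotomy is in hand.
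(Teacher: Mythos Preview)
Your proposal is correct and follows essentially the same route as the paper. The paper packages the HLS lower bound as $g(x)=\tfrac{1}{m-1}x-\tfrac{C(d,\beta)}{2(d-2s)}x^{2/m}$ in the variable $x=\|u\|_{L^m}^m$ rather than your $g(M)$ in $M=\|u\|_{L^m}$, but the trapping argument, the virial identity $\tfrac{d}{dt}\int|x|^2u=-4s\int u^m+2(d-2s)F(u)$, and the reduction of global existence to a uniform $L^m$ bound feeding into a Moser-type $L^\infty$ bootstrap (their Theorem~\ref{ueps}) are exactly as you outline.
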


Let us mention that the above results also hold true in the local setting $s=1$ for the lower dimensional case $d=2$, where Riesz potential is replaced by the log interaction and the energy critical exponent $\frac{2d}{d+2}$ and mass critical exponent $2-\frac{2}{d}$ coincides, i.e. $\frac{2d}{d+2}=2-\frac{2}{d}=1.$ In this case, the $L^1$ norm of the steady states is $8\pi$ and the global behaviors of solutions can be classified by the $L^1$ norm of the initial data. Actually, if $\|u_0\|_{L^1(\R^2)} < 8\pi,$ the solution of \er{fracks} exists globally in time, while the solution blows up in finite time for $\|u_0\|_{L^1(\R^2)} > 8\pi$ \cite{BCM08,bdp06}. Notice that in the nonlocal case $s>1,$ it is not straightforward to extend the above results to $d=2.$ In fact, there is a gap between $\frac{2d}{d+2s}$ and $2-\frac{2s}{d}$, i.e. $2-\frac{2s}{d}<\frac{2d}{d+2s}$ and the two dimensional degenerate problem is essentially new aspect.

The results are organised as follows. This work is entirely devoted to establish an exact criteria on the initial data for the global existence and finite time blow-up of dynamical solutions to \er{fracks}. With this aim we firstly construct the existence criterion which shows a key maximal existence time for the free energy solution of \er{fracks} in Section \ref{Existence}. Section \ref{Steady} mainly explores stationary solutions to \er{fracks} and shows that they are global minimizers of the free energy. Based on the above analysis, Section \ref{Proofmain} makes use of the steady states to prove the main theorem concerning the dichotomy, the global existence for $\|u_0\|_{L^m (\R^d)}<\|U_s\|_{L^m (\R^d)}$ and finite time blow-up for $\|u_0\|_{L^m (\R^d)}>\|U_s\|_{L^m (\R^d)}$. Finally, Section \ref{conclu} concludes the main work of this paper, some open questions for equation \er{fracks} are also addressed.

\section{Existence criterion} \label{Existence}

In this section, we will establish the existence criterion of a bounded weak solution to \er{fracks}. As in \cite{BL13,BL14,suku06}, we consider the regularized problem
\begin{align}\label{kseps}
\left\{
  \begin{array}{ll}
    \partial_t u_\varepsilon=\Delta u^m_\varepsilon+\varepsilon \Delta u_\varepsilon-\nabla \cdot \left( u_\varepsilon \nabla c_\varepsilon \right),~~x \in \R^d,~t>0, \\
 u_\varepsilon(x,0)=u_{0\varepsilon} \ge 0,
  \end{array}
\right.
\end{align}
where $c_\varepsilon$ is defined as
\begin{align}
c_\varepsilon=R_\varepsilon \ast u_\varepsilon
\end{align}
with the regularized Riesz potential
\begin{align}
R_\varepsilon(x)=\frac{1}{(d-2s)\left( |x|^2+\varepsilon^2  \right)^{\frac{d-2s}{2}}}.
\end{align}
Here $u_{0\varepsilon}$ is a sequence of approximation for $u_0$ and can be constructed to satisfy that there exists $\varepsilon_0>0$ such that for any $0<\varepsilon<\varepsilon_0,$
\begin{align}
\left\{
  \begin{array}{ll}
 u_{0\varepsilon} \ge 0, ~~ \|u_\varepsilon(x,0)\|_{L^1(\R^d)}=\|u_0\|_{L^1(\R^d)}, \\
 \int_{\R^d}|x|^2 u_{0\varepsilon} dx \to \int_{\R^d}|x|^2 u_0(x)dx,~~\mbox{as}~~\varepsilon \to 0, \\
 u_{0\varepsilon}(x) \to u_0(x)~~\mbox{in}~~L^q(\R^d),~~\mbox{for}~~1 \le q <\infty,~~\mbox{as}~~\varepsilon \to 0.
  \end{array}
\right.
\end{align}
This regularized problem has global in time smooth solutions for any $\varepsilon>0$. This approximation has been proved to be convergent. More precisely, following the arguments in \cite[Theorem 4.2]{BL14}, \cite[Lemma 4.8]{CHVY19} and \cite[Section 4]{suku06} we assert that if
\begin{align}\label{Linfinity}
\|u_\varepsilon(\cdot,t)\|_{L^\infty(\R^d)}<C_0,
\end{align}
where $C_0$ is independent of $\varepsilon>0,$ then there exists a subsequence $\varepsilon_n \to 0$ such that
\begin{align}
  \begin{array}{ll}
    u_{\varepsilon_n} \rightarrow u~~\mbox{in}~~L^r(0,T;L^r(\R^d)),~~1 \le r<\infty \label{conver1}
  \end{array}
\end{align}
and $u$ is a weak solution to \er{fracks} on $[0,T).$

According to the above analysis, a weak solution to \er{fracks} on $[0,T)$ exists when \er{Linfinity} is fulfilled. So we shall focus on establishing the availability of the $L^\infty$-bound. As we will see in the following theorem where the local in time existence and blow-up criteria are constructed, such a bound follows from the $L^r$ norm for $r>m=\frac{2d}{d+2s}$ which additionally provides a characterisation of the maximal existence time.

\begin{theorem}(Local in time existence and blow-up criteria)\label{ueps}
Under assumption \er{12} on the initial condition, there are a maximal existence time $T_w \in (0,\infty]$ and a weak solution $u$ to \er{fracks} on $[0,T_w)$. If $T_w<\infty$, then
\begin{align}
\|u(\cdot,t)\|_{L^m(\R^d)} \to \infty~~\mbox{as}~~t \to T_w.
\end{align}
\end{theorem}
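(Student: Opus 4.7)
The plan is to work with the regularised problem~\er{kseps}, whose smooth global-in-time solutions $u_\varepsilon$ exist for every $\varepsilon>0$, and to derive a priori bounds on $u_\varepsilon$ that depend only on $\sup_{[0,T]}\|u_\varepsilon(\cdot,t)\|_{L^m(\R^d)}$ and on the initial data. Once such bounds upgrade to the uniform estimate~\er{Linfinity}, the compactness statement~\er{conver1} (borrowed from \cite{BL14,CHVY19,suku06}) yields a weak solution $u$ of~\er{fracks}. I would therefore define $T_w$ as the supremum of $T>0$ for which $\sup_{[0,T]}\|u(\cdot,t)\|_{L^m(\R^d)}<\infty$, so that the blow-up alternative is built into the definition; it then remains to show that the a priori theory actually produces a solution on every $[0,T]$ with $T<T_w$.

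The central estimate is the $L^p$ identity obtained by testing~\er{kseps} against $u_\varepsilon^{p-1}$ for $p\ge m$, integrating by parts and discarding the non-positive viscous contribution,
\begin{align*}
\frac{1}{p}\frac{d}{dt}\int_{\R^d}u_\varepsilon^p\,dx
+\frac{4m(p-1)}{(m+p-1)^2}\int_{\R^d}\bigl|\nabla u_\varepsilon^{(m+p-1)/2}\bigr|^2 dx
\le -\frac{p-1}{p}\int_{\R^d} u_\varepsilon^{p}\,\Delta c_\varepsilon\,dx .
\end{align*}
Since $s>1$, the operator $-\Delta c_\varepsilon$ is, up to the regularisation, the Riesz potential $I_{2s-2}u_\varepsilon$, and the HLS inequality applied with exponents $m$ and its HLS-dual $\tfrac{2d}{d+2s-4}$ places the critical norm as a prefactor,
\begin{align*}
\Bigl|\int_{\R^d} u_\varepsilon^{p}\,\Delta c_\varepsilon\,dx\Bigr|
\le C\,\|u_\varepsilon\|_{L^m(\R^d)}\,\|u_\varepsilon\|_{L^{pa}(\R^d)}^{p},\qquad a=\tfrac{2d}{d+2s-4}.
\end{align*}
Gagliardo--Nirenberg then interpolates $\|u_\varepsilon\|_{L^{pa}}$ between $\|u_\varepsilon\|_{L^m}$ and $\|u_\varepsilon^{(m+p-1)/2}\|_{L^{2d/(d-2)}}$, and Sobolev converts the latter into $\|\nabla u_\varepsilon^{(m+p-1)/2}\|_{L^2}$; a Young-type splitting absorbs a fraction of the gradient into the left-hand side and leaves a closed inequality
\begin{align*}
\frac{d}{dt}\|u_\varepsilon(\cdot,t)\|_{L^p(\R^d)}^p
\le \Phi_p\!\bigl(\|u_\varepsilon(\cdot,t)\|_{L^m(\R^d)}\bigr)\,\bigl(1+\|u_\varepsilon(\cdot,t)\|_{L^p(\R^d)}^{p}\bigr)
\end{align*}
with continuous, non-decreasing $\Phi_p$.

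A Gr\"{o}nwall argument now controls $\sup_{[0,T]}\|u_\varepsilon\|_{L^p}$ by $\|u_0\|_{L^p}$ and $\sup_{[0,T]}\|u_\varepsilon\|_{L^m}$ for every $p\in[m,\infty)$. Running Moser iteration in $p$, in the manner of \cite{BL14,CHVY19,suku06}, supplies the uniform estimate~\er{Linfinity} on any $[0,T]\subset[0,T_w)$; the compactness~\er{conver1} then produces a weak solution $u$ on $[0,T]$, hence on $[0,T_w)$ by exhausting $T\uparrow T_w$. If $T_w<\infty$ were compatible with $\|u(\cdot,t)\|_{L^m}$ staying bounded as $t\to T_w$, the same estimates would give a uniform bound on $[0,T_w]$ and a standard continuation argument restarting from $t=T_w$ would extend the solution beyond $T_w$, contradicting the maximality of $T_w$. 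Hence $\|u(\cdot,t)\|_{L^m(\R^d)}\to\infty$ as $t\to T_w$.

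The step I expect to be the main obstacle is the critical interpolation in the second paragraph. At $m=\tfrac{2d}{d+2s}$ the HLS bound on the aggregation and the Sobolev bound on the diffusion scale identically, so the Gagliardo--Nirenberg exponents must be tuned so that $\|u_\varepsilon\|_{L^m}$ is the only supercritical factor and the power of $\|\nabla u_\varepsilon^{(m+p-1)/2}\|_{L^2}$ on the right-hand side is strictly less than $2$; otherwise the Young-type absorption fails. Tracking the $p$-dependence of $\Phi_p$ carefully enough that the subsequent Moser iteration still closes is where the energy-critical nature of the problem is really felt.
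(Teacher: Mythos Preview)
Your overall architecture (regularise, get uniform $L^\infty$ via $L^r$ estimates and Moser iteration, pass to the limit, then characterise $T_w$ by a continuation argument) matches the paper. The gap is precisely the one you flag in your last paragraph, and it is fatal as stated: at $m=\tfrac{2d}{d+2s}$ the interpolation you propose cannot be tuned to give a gradient exponent strictly below~$2$.

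Concretely, after your HLS step the aggregation term is bounded by $C\|u_\varepsilon\|_{L^m}\|u_\varepsilon\|_{L^{pa}}^p$ with $a=\tfrac{2d}{d+2s-4}$. If you interpolate $\|u_\varepsilon\|_{L^{pa}}$ between $\|u_\varepsilon\|_{L^m}$ and $\|u_\varepsilon^{(m+p-1)/2}\|_{L^{2d/(d-2)}}$ and then apply Sobolev, a direct computation (or simply the scaling invariance $u_\lambda(x)=\lambda u(\lambda^{(2-m)/(2s)}x)$, under which $\|u\|_{L^m}$ is invariant while both the aggregation integral and $\|\nabla u^{(m+p-1)/2}\|_{L^2}^2$ scale as $\lambda^{p-(d+2s-4)/(d+2s)}$) forces the resulting power of $\|\nabla u_\varepsilon^{(m+p-1)/2}\|_{L^2}$ to be \emph{exactly}~$2$. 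You therefore obtain
\[
\text{(aggregation)}\;\le\; C\,\|u_\varepsilon\|_{L^m(\R^d)}^{\gamma}\,\bigl\|\nabla u_\varepsilon^{(m+p-1)/2}\bigr\|_{L^2(\R^d)}^{2}
\]
for some $\gamma>0$, and the Young-type absorption only succeeds under a \emph{smallness} assumption on $\|u_\varepsilon\|_{L^m}$, not mere boundedness. So the closed inequality $\frac{d}{dt}\|u_\varepsilon\|_{L^p}^p\le \Phi_p(\|u_\varepsilon\|_{L^m})(1+\|u_\varepsilon\|_{L^p}^p)$ is not available in the regime the theorem must cover.

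The paper avoids this by \emph{not} trying to close against $\|u_\varepsilon\|_{L^m}$. Instead it applies HLS in the symmetric form $\iint\frac{u_\varepsilon^r(x)u_\varepsilon(y)}{|x-y|^{d+2-2s}}\,dxdy\le C\|u_\varepsilon\|_{L^{(r+1)/(1+(2s-2)/d)}}^{r+1}$ and then interpolates this norm between $\|u_\varepsilon\|_{L^r}$ (same $r$ as on the left) and $\|\nabla u_\varepsilon^{(m+r-1)/2}\|_{L^2}$. For $r>m$ the gradient exponent is now strictly less than~$2$, so one can absorb and obtain the \emph{autonomous} superlinear ODI
\[
\frac{d}{dt}\|u_\varepsilon\|_{L^r(\R^d)}^r \;\le\; C\bigl(\|u_\varepsilon\|_{L^r(\R^d)}^r\bigr)^{1+\delta},\qquad \delta=\delta(r)>0,
\]
which already yields a local-in-time bound with an explicit lower bound on the existence time in terms of $\|u_0\|_{L^r}$. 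The $L^m$ bound (and hence the blow-up criterion in $L^m$) then follows a posteriori by interpolation between $L^1$ and $L^r$, together with the continuation argument of \cite[Theorem~2.4]{BJ09}. In short: the supercritical norm that makes the absorption work must be $\|u_\varepsilon\|_{L^r}$ with $r>m$, not $\|u_\varepsilon\|_{L^m}$ itself.
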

\begin{proof}
To prove this result we need to refine the arguments used in the proof of \cite[Theorem 2.11]{BL13}. We follow a procedure analogous to the ones in \cite[Lemma 2.3]{BJ09}.

{\it\textbf{Step 1}} ($L^r$-estimates, $r \in (m,\infty)$) \quad It's obtained by multiplying the equation \er{kseps} with $ru_\varepsilon^{r-1}$ that
\begin{align}\label{230127}
& \frac{d}{dt} \int_{\R^d} u_\varepsilon^r dx+\frac{4mr(r-1)}{(m+r-1)^2} \int_{\R^d} \left| \nabla u_\varepsilon^{\frac{m+r-1}{2}} \right|^2 dx+\varepsilon \frac{4(r-1)}{r} \int_{\R^d} \left| \nabla u_\varepsilon^{r/2} \right|^2 dx \nonumber \\
=& (2s-2)(r-1) \iint_{\R^d \times \R^d} \frac{u_\varepsilon^r(x) u_\varepsilon(y)}{\left( |x-y|^2+\varepsilon^2 \right)^{\frac{d+2-2s}{2}}} dxdy.
\end{align}
Due to $2s>2,$ we treat the right hand side of \er{230127} by applying the HLS inequality \er{fh} with $f=u_\varepsilon^r(x)$ and $h=u_\varepsilon(y)$ such that
\begin{align}\label{cdsr}
\iint_{\R^d \times \R^d} \frac{u_\varepsilon^r(x) u_\varepsilon(y)}{\left( |x-y|^2+\varepsilon^2 \right)^{\frac{d+2-2s}{2}}} dxdy & \le \iint_{\R^d \times \R^d} \frac{u_\varepsilon^r(x) u_\varepsilon(y)}{|x-y|^{d+2-2s}} dxdy \nonumber \\
& \le C_{dsr} \|u_\varepsilon^r\|_{L^{q}(\R^d)} \|u_\varepsilon\|_{L^{q_1}(\R^d)} = C_{dsr} \left\|u_\varepsilon \right\|_{L^{\frac{r+1}{1+(2s-2)/d}}(\R^d)}^{r+1},
\end{align}
where $\frac{1}{q}+\frac{1}{q_1}+\frac{d+2-2s}{d}=2$ and we have used $q_1=rq=\frac{r+1}{1+(2s-2)/d}$, $C_{dsr}$ is a bounded constant depending on $d,s,r$. Furthermore, for
$$
1 \le q_0<r_0<\frac{2d}{d-2},
$$
we have the following GNS inequality \cite{bp07}: there exists a positive constant $C$ such that
\begin{align}
\|v\|_{L^{r_0}(\R^d)} \le C \|\nabla v\|_{L^{2}(\R^d)}^\theta \|v\|_{L^{q_0}(\R^d)}^{1-\theta},\quad \theta=\frac{\frac{1}{q_0}-\frac{1}{r_0}}{\frac{1}{q_0}-\frac{d-2}{2d} },
\end{align}
which we apply with $v=u_\varepsilon^{\frac{m+r-1}{2}}$ to obtain
\begin{align}\label{230129}
\left\|u_\varepsilon \right\|_{L^{\frac{r+1}{1+(2s-2)/d}}(\R^d)}^{r+1} \le C \left\|\nabla u_\varepsilon^{\frac{m+r-1}{2}} \right\|_{L^2(\R^d)}^{a \theta} \|u_\varepsilon\|_{L^{q_0 (m+r-1)/2}(\R^d)}^{a(1-\theta)\frac{m+r-1}{2}},
\end{align}
where the parameters satisfy
\begin{align*}
a \frac{m+r-1}{2}=r+1, \quad r_0 \frac{m+r-1}{2}=\frac{r+1}{1+(2s-2)/d}.
\end{align*}
Here we pick
\begin{align*}
m=\frac{d(2-m)}{2s}< q_0 \frac{m+r-1}{2}
\end{align*}
and simple computations show that
\begin{align*}
a \theta=2 \left( 1+ \frac{ \frac{2-m}{(m+r-1)q_0}-\frac{2s}{2d}}{ \frac{1}{q_0}-\frac{d-2}{2d} } \right)<2
\end{align*}
in case of $1 < m < 2-2s/d$. By Young's inequality, from \er{230129} we continue to obtain
\begin{align}
\left\|u_\varepsilon \right\|_{L^{\frac{r+1}{1+(2s-2)/d}}(\R^d)}^{r+1} \le \frac{2mr}{C_{dsr} (2s-2) (m+r-1)^2} \left\|\nabla u_\varepsilon^{\frac{m+r-1}{2}} \right\|_{L^2(\R^d)}^2 +C \|u_\varepsilon\|_{L^{q_0 (m+r-1)/2}(\R^d)}^{(r+1)(1-\theta)/(1-a\theta/2)},
\end{align}
where the constant $C_{dsr}$ is defined as in \er{cdsr} and
\begin{align}
\frac{(r+1)(1-\theta)}{1-a\theta/2}=\frac{2s-(2-m)(2s-2)+2sr-d(2-m)}{2s-\frac{2d(2-m)}{q_0 (m+r-1)}}>q_0 \frac{m+r-1}{2}
\end{align}
in case of
$$\frac{d(2-m)}{2s}<q_0 \frac{m+r-1}{2} \le r.$$
In particular, taking
$$
q_0 \frac{m+r-1}{2}=r
$$
we write
\begin{align}\label{0131}
(2s-2)C_{dsr}(r-1) \|u_\varepsilon\|_{L^{\frac{r+1}{1+(2s-2)/d}}(\R^d)}^{r+1} \le & \frac{2mr(r-1)}{(m+r-1)^2} \left\|\nabla u_\varepsilon^{\frac{m+r-1}{2}} \right\|_{L^2(\R^d)}^2 \nonumber \\
& + C \left( \|u_\varepsilon\|_{L^{r}(\R^d)}^{r} \right)^{1+\frac{\frac{2s}{d}-(2-m)\frac{2s-2}{d}}{\frac{2s}{d}r-(2-m)}}
\end{align}
for $r>\frac{d(2-m)}{2s}=m$ due to $m=\frac{2d}{d+2s}.$ Therefore, substituting \er{cdsr} and \er{0131} into \er{230127} we thus end up with
\begin{align}\label{230131}
& \frac{d}{dt} \|u_\varepsilon \|_{L^r(\R^d)}^r +\frac{2mr(r-1)}{(m+r-1)^2} \int_{\R^d} \left| \nabla u_\varepsilon^{\frac{m+r-1}{2}} \right|^2 dx+\varepsilon \frac{4(r-1)}{r} \int_{\R^d} \left| \nabla u_\varepsilon^{r/2} \right|^2 dx \nonumber \\
\le & C \left( \|u_\varepsilon\|_{L^{r}(\R^d)}^{r} \right)^{1+\frac{\frac{2s}{d}-(2-m)\frac{2s-2}{d}}{\frac{2s}{d}r-(2-m)}}.
\end{align}
Hence the local in time $L^r$-estimates are followed
\begin{align}\label{Lr}
\|u_\varepsilon (\cdot,t)\|_{L^r(\R^d)}^r & \le \frac{1}{\left( \|u_\varepsilon(0) \|_{L^r(\R^d)}^{-r \delta}-\delta C(d,s,r) t \right)^{\frac{1}{\delta}}}\nonumber \\
&= \left(\frac{C(m,s,r)}{T_0-t}\right)^{\frac{1}{\delta}},\qquad T_0=\frac{\|u_\varepsilon(0) \|_{L^r(\R^d)}^{-r \delta}}{\delta C(d,s,r)},
\end{align}
where $\delta=\frac{1-\frac{(2-m)(2s-2)}{2s}}{r-\frac{d(2-m)}{2s}}>0$ for $r>\frac{d(2-m)}{2s}$ because of $1 < m<2-\frac{2s}{d}.$ Moreover, coming back to \er{230131}, we further deduce that
\begin{align}\label{nablaLr}
\left\|\nabla u_\varepsilon^{\frac{m+r-1}{2}} \right\|_{L^2(0,T_0;L^2(\R^d))} \le C\left( \|u_\varepsilon(0)\|_{L^r(\R^d)} \right).
\end{align}

{\it\textbf{Step 2}} ($L^m$-estimates)\quad The local in time $L^m$-estimate is derived from \er{Lr} by the interpolation inequality
\begin{align}\label{LLm}
\|u_\varepsilon\|_{L^m(\R^d)} & \le \|u_\varepsilon\|_{L^r(\R^d)}^\eta \|u_\varepsilon\|_{L^1(\R^d)}^{1-\eta} \nonumber\\
& \le M^{1-\eta} \left(\frac{C(m,s,r)}{T_0-t} \right)^{\frac{\eta}{r \delta}}, \qquad \eta=\frac{1-\frac{1}{m}}{1-\frac{1}{r}}.
\end{align}

{\it\textbf{Step 3}} ($L^\infty$-estimates)\quad The fact \er{Lr} allows us to imitate the proof of \cite[Theorem 2.3]{CW19} and \cite[Theorem 4.2]{BL14} to conclude that
\begin{align}\label{Linfinity23}
\displaystyle \sup_{0<t<T_0} \|u_\varepsilon\|_{L^\infty(\R^d)} \le C\left( \|u_\varepsilon(0)\|_{L^1(\R^d)},\|u_\varepsilon(0)\|_{L^\infty(\R^d)} \right)
\end{align}
by virtue of Moser iterative method. Thus following similar arguments of \cite[Section 4]{suku06} with the aid of regularities \er{nablaLr} and \er{Linfinity23}, we deduce that there exists a weak solution $u$ of \er{fracks} by passing to the limit $u_\varepsilon \to u$ as $\varepsilon \to 0$ (without relabeling). Finally, a direct consequence of \cite[Theorem 2.4]{BJ09} characterises the maximal existence time of the weak solution and completes the proof. $\Box$
\end{proof}

Furthermore, a detailed analysis of the proof of \cite[Theorem 2.11]{BL13} shows that the weak solution is also a free energy solution satisfying $F(u(\cdot,t)) \le F(u_0).$

\begin{proposition}(Existence of the free energy solution)\label{energy}
Under assumption \er{12} on the initial data and $u_\varepsilon(\cdot,t) \in L^\infty \left(0,T_w;L^\infty(\R^d) \right)$ on the approximated sequence, there exists a free energy solution to \er{fracks} on $[0,T_w).$
\end{proposition}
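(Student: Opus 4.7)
The plan is to follow the strategy of \cite[Theorem 2.11]{BL13}: derive an energy identity at the regularized level \er{kseps}, obtain uniform bounds from the hypothesized $L^\infty$ control, and pass to the limit $\eps\to 0$ using strong convergence of $u_\eps$ together with weak lower semicontinuity. Concretely, I would first test \er{kseps} against the chemical potential $\mu_\eps=\frac{m}{m-1}u_\eps^{m-1}+\eps\ln u_\eps-c_\eps$, which satisfies $u_\eps\nabla\mu_\eps=\nabla u_\eps^m+\eps\nabla u_\eps-u_\eps\nabla c_\eps$. Integration by parts, combined with the algebraic identities $\sqrt{u_\eps}\cdot\tfrac{m}{m-1}\nabla u_\eps^{m-1}=\tfrac{2m}{2m-1}\nabla u_\eps^{m-1/2}$ and $\sqrt{u_\eps}\,\nabla\ln u_\eps=2\nabla\sqrt{u_\eps}$, yields
\[
\frac{d}{dt}\left[F(u_\eps)+\eps\int_{\R^d}u_\eps\ln u_\eps\,dx\right] + \int_{\R^d}\left|\frac{2m}{2m-1}\nabla u_\eps^{m-1/2}+2\eps\nabla\sqrt{u_\eps}-\sqrt{u_\eps}\nabla c_\eps\right|^{2}dx = 0.
\]

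Next, I would exploit the standing hypothesis $\|u_\eps(\cdot,t)\|_{L^\infty(\R^d)}\le C_0$ uniformly in $\eps$, which together with mass conservation gives uniform $L^{p}$ bounds for every $p\in[1,\infty]$. Combined with the propagated finite second moment, this forces $\eps\int u_\eps\ln u_\eps\,dx\to 0$, while the HLS inequality \er{fh} bounds $F(u_\eps(t))$ uniformly from below. Hence the full dissipation integral is uniformly bounded in $\eps$. Setting $J_\eps:=\tfrac{2m}{2m-1}\nabla u_\eps^{m-1/2}+2\eps\nabla\sqrt{u_\eps}-\sqrt{u_\eps}\nabla c_\eps$ and $D_\eps:=\tfrac{2m}{2m-1}\nabla u_\eps^{m-1/2}-\sqrt{u_\eps}\nabla c_\eps$, a Cauchy--Schwarz comparison $\|D_\eps\|_{L^2}^2\le 2\|J_\eps\|_{L^2}^2+16\eps^{2}\|\nabla\sqrt{u_\eps}\|_{L^2}^2$, combined with Lemma \ref{uceps} controlling $\|\sqrt{u_\eps}\nabla c_\eps\|_{L^2(0,T;L^2(\R^d))}$ and the $\eps^{2}$-Dirichlet error attached to \er{kseps}, gives the uniform bound $\|\nabla u_\eps^{m-1/2}\|_{L^2(0,T;L^2(\R^d))}\le C$. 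This produces regularity \er{0201} once the limit is taken, while \er{02011} is immediate from the uniform $L^{1}\cap L^{\infty}$ control of $u_\eps$ via interpolation.

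Finally, I would pass to the limit $\eps\to 0$ along the subsequence from Theorem \ref{ueps}. The strong $L^{r}$ convergence of $u_\eps$ (for any $r<\infty$) combined with the uniform $L^{\infty}$ bound yields $F(u_\eps(t))\to F(u(t))$ and $\sqrt{u_\eps}\nabla c_\eps\to \sqrt{u}\nabla c$ strongly in $L^2(0,T;L^2(\R^d))$, while weak compactness delivers $\nabla u_\eps^{m-1/2}\rightharpoonup\nabla u^{m-1/2}$ weakly in $L^2(0,T;L^2(\R^d))$, the limit being identified through the strong $L^{2}$ convergence of $u_\eps^{m-1/2}$ and a standard distributional argument. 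Hence $D_\eps\rightharpoonup D:=\tfrac{2m}{2m-1}\nabla u^{m-1/2}-\sqrt{u}\nabla c$ weakly in $L^2(0,T;L^2(\R^d))$, and weak lower semicontinuity of the $L^2$ norm, together with the vanishing of the $\eps$-entropy and $\eps$-Dirichlet remainders, delivers \er{Fuequality}. The hardest step will be (i) verifying that the Cauchy--Schwarz passage from $J_\eps$ to $D_\eps$ does not lose uniform control of the dissipation, since $\eps^{2}\int|\nabla\sqrt{u_\eps}|^{2}$ is not directly available and must be absorbed using the regularization's own energy estimate, and (ii) upgrading the convergence of $\sqrt{u_\eps}\nabla c_\eps$ to strong $L^{2}$ convergence, which requires combining the strong $L^{r}$ convergence of $u_\eps$ with the continuity of the Riesz-type convolution defining $c_\eps$ in suitable weak Young spaces.
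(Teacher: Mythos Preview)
Your outline is essentially correct, but it takes a different route from the paper's and in doing so creates an extra hurdle that the paper sidesteps entirely.

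The paper does \emph{not} test against the full chemical potential $\frac{m}{m-1}u_\eps^{m-1}+\eps\ln u_\eps-c_\eps$. Instead it multiplies \er{kseps} by the \emph{partial} potential $\mu_\eps=\frac{m}{m-1}u_\eps^{m-1}-c_\eps$. This produces the dissipation $D_\eps=\frac{2m}{2m-1}\nabla u_\eps^{m-1/2}-\sqrt{u_\eps}\nabla c_\eps$ directly, together with a good term $\frac{4\eps}{m}\|\nabla u_\eps^{m/2}\|_{L^2}^2$ on the left and an error term $\eps(2s-2)\iint u_\eps(x)u_\eps(y)(|x-y|^2+\eps^2)^{-(d+2-2s)/2}\,dxdy$ on the right. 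That right-hand error is bounded uniformly by HLS and the $L^1\cap L^\infty$ control, so the prefactor $\eps$ kills it. The paper then does \emph{not} extract the bound on $\|\nabla u_\eps^{m-1/2}\|_{L^2_{t,x}}$ from the energy identity; it simply quotes estimate \er{nablaLr} from the proof of Theorem~\ref{ueps} (taking $r=m$ there), and combines this with Lemma~\ref{uceps} to bound each piece of $D_\eps$ separately. Lower semicontinuity then closes the argument.

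Your approach, by contrast, yields a clean identity for $J_\eps$ but forces you to recover $D_\eps$ from $J_\eps$, and this is exactly where your write-up is thin: the inequality $\|D_\eps\|^2\le 2\|J_\eps\|^2+8\eps^2\|\nabla\sqrt{u_\eps}\|^2$ is only useful once you have $\eps^2\int_0^T\|\nabla\sqrt{u_\eps}\|_{L^2}^2\,dt\to 0$. Your phrase ``the $\eps^2$-Dirichlet error attached to \er{kseps}'' is too vague; you actually need a separate entropy-dissipation estimate (test \er{kseps} against $\ln u_\eps$, bound the nonlocal term $-\int u_\eps\Delta c_\eps$ via HLS, and conclude $\eps\int_0^T\|\nabla\sqrt{u_\eps}\|_{L^2}^2\le C$). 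Once that is in hand your argument goes through, and the passage to the limit is the same as the paper's. So: correct strategy, different and slightly more laborious bookkeeping, with the paper's choice of test function being the cleaner shortcut.
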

\begin{proof}
Multiplying $\mu_\varepsilon=\frac{m}{m-1}u_\varepsilon^{m-1}-c_\varepsilon$ to the regularized equation \er{kseps} yields
\begin{align}\label{230202}
& \frac{d}{dt}F\left(u_\varepsilon(\cdot,t)\right)+\int_{\R^d} \left
| \frac{2m}{2m-1}\nabla
u_\varepsilon^{m-\frac{1}{2}}-\sqrt{u_\varepsilon}\nabla
c_\varepsilon \right |^2 dx +\frac{4\varepsilon}{m} \int_{\R^d}
\left|\nabla u_\varepsilon^{m/2}\right|^2 dx \nonumber \\
=& \varepsilon (2s-2) \iint_{\R^d \times \R^d} \frac{u_\varepsilon(x)u_\varepsilon(y)}{\left( |x-y|^2+\varepsilon^2 \right)^{\frac{d+2-2s}{2}}} dxdy
\end{align}
for any $t \in [0,T_w).$ Integrating \er{230202} in time from $0$ to $t$ arrives at
\begin{align}\label{doublestar}
& F\left[u_\varepsilon(\cdot,t)\right]+\int_0^t \int_{\R^d} \left
| \frac{2m}{2m-1}\nabla
u_\varepsilon^{m-\frac{1}{2}}-\sqrt{u_\varepsilon}\nabla
c_\varepsilon \right |^2 dxds \nonumber \\
\le & F\left[u_\varepsilon(0)\right]+\varepsilon (2s-2) \int_0^t \iint_{\R^d \times \R^d} \frac{u_\varepsilon(x)u_\varepsilon(y)}{\left( |x-y|^2+\varepsilon^2 \right)^{\frac{d+2-2s}{2}}} dxdyds
\end{align}
From \er{nablaLr} and \er{Linfinity23} we know that for any $t \in [0,T_w)$, the following basic estimates hold true:
\begin{align}
&\|u_\varepsilon\|_{L^\infty \left(0,t; L_+^1 \cap L^\infty(\R^d)\right)} \le C, \label{1234} \\
&\left\|\nabla u_\varepsilon^{\frac{m+r-1}{2}} \right\|_{L^2(0,t; L^2(\R^d))} \le C,~~\mbox{for}~~1 \le r<\infty. \label{12345}
\end{align}
Here $C$ represent constants depending only on $\|u_\varepsilon(0)\|_{L_+^1 \cap L^\infty(\R^d)}$. We shall use \er{1234} and \er{12345} to pass to the limit $\varepsilon \to 0$ in \er{doublestar}.

Firstly, the term in the right hand side of \er{doublestar} is bounded with the help of \er{1234} and the HLS inequality \er{fh} that
\begin{align}\label{230207}
\int_0^t \iint_{\R^d \times \R^d} \frac{u_\varepsilon(x)u_\varepsilon(y)}{\left( |x-y|^2+\varepsilon^2 \right)^{\frac{d+2-2s}{2}}} dxdyds \le C \int_0^t \|u_\varepsilon \|_{L^{2d/(d-2+2s)}(\R^d)}^2 ds \le C
\end{align}
for any $t \in [0,T_w).$

Secondly, we note that the dissipation term is uniformly bounded
\begin{align}
\int_0^{t} \int_{\R^d} \left
| \frac{2m}{2m-1}\nabla
u_\varepsilon^{m-\frac{1}{2}}-\sqrt{u_\varepsilon}\nabla
c_\varepsilon \right |^2 dx ds \le C.
\end{align}
Actually, taking $r=m$ in \er{12345} yields $\left\|\nabla u_\varepsilon^{m-1/2} \right\|_{L^2(0,t;L^2(\R^d))} \le C$. Besides, by Lemma \ref{uceps} and using \er{1234} we find
\begin{align}
\int_0^t \left\| u_\varepsilon |\nabla c_\varepsilon|^2 \right\|_{L^1(\R^d)} ds \le C \int_0^t \|u_\varepsilon\|_{L^{\frac{3d}{d-2(1-2s)}}(\R^d)}^3 ds \le C.
\end{align}
Now by going through similar steps as the proof of \cite[Theorem 2.11]{BL13}, it is straightforward from the convergence properties \er{conver1} to pass to the limit $\varepsilon \to 0$ (without relabeling) and thus obtain
the lower semi-continuity of the dissipation term
\begin{align}\label{dissipationae}
& \int_{0}^t \int_{\R^d} \left|\frac{2m}{2m-1}\nabla
u^{m-1/2}-\sqrt{u}\nabla c \right|^2 dx ds \nonumber \\
\le & \displaystyle
\liminf_{\varepsilon \to 0} \int_{0}^t \int_{\R^d}
\left|\frac{2m}{2m-1}\nabla
u_\varepsilon^{m-1/2}-\sqrt{u_\varepsilon}\nabla c_\varepsilon
\right|^2 dxds.
\end{align}

In addition, the convergence of the free energy can be directly deduced from \er{conver1} that
\begin{align}\label{Fstrong}
& F(u_\varepsilon(\cdot,t))= \frac{1}{m-1} \int_{\R^d} u_\varepsilon^m dx-\frac{1}{2(d-2s)} \iint_{\R^d \times \R^d} \frac{u_\varepsilon(x,t)u_\varepsilon(y,t)}{|x-y|^{d-2s}} dxdy \to \nonumber \\
& \frac{1}{m-1} \int_{\R^d} u^m dx-\frac{1}{2(d-2s)} \iint_{\R^d \times \R^d} \frac{u(x,t)u(y,t)}{|x-y|^{d-2s}} dxdy=F(u(\cdot,t))~~\mbox{a.e.~in}~(0,T_w).
\end{align}
Hence combining \er{230207}, \er{dissipationae} and \er{Fstrong}, letting $\varepsilon \to 0$ in \er{doublestar} follows that
\begin{align}
&F \left[ u(\cdot,t) \right]+ \int_{0}^t \int_{\R^d}\left |
\frac{2m}{2m-1}\nabla u^{m-\frac{1}{2}}-\sqrt{u}\nabla c \right
|^2dxds \nonumber\le F[u_0(x)],  \quad a.e. ~~t \in (0,T_w).
\end{align}
Thus $u$ is a free energy solution satisfying the free energy inequality \er{Fuequality}. $\Box$
\end{proof}

\section{Steady states} \label{Steady}

As we have just seen, the existence time of the free energy solution to \er{fracks} heavily depends on the behavior of its $L^m$-norm. Moreover, the first term of the free energy involves the $L^m$-norm and \er{fhds} tells us that the second term of the free energy is related to the stationary solution $U_s$ to \er{fracks}. Hence, in order to prove the existence of a global weak solution by the energy functional, the information given by steady states will be of paramount importance. Therefore, in this section, we will give a detailed analysis of steady states to \er{fracks}.

The steady equation of \er{fracks} is followed in the sense of distribution
\begin{align}\label{11}
    \Delta U_s^m-\nabla \cdot \left( U_s(x) \nabla C_s(x) \right)=0,~~ x \in \R^d,
\end{align}
where $C_s(x)$ is given by
\begin{align}\label{22}
C_s(x)=\frac{1}{d-2s} \int_{\R^d} \frac{U_s(y)}{|x-y|^{d-2s}} dy.
\end{align}
Starting from a Pohozaev type identity, we will show a full characterization of minimizers of $F(u)$ to relate to the steady states.
\begin{lemma}(A Pohozaev identity for steady solutions) Assume $U_s \in L^m(\R^d)$ satisfying \er{11}, then the steady solution satisfies the following identity in the sense of distribution
\begin{align}\label{Poho}
\int_{\R^d} U_s^m dx=\frac{d-2s}{2d} \int_{\R^d} C_s U_s dx.
\end{align}
\end{lemma}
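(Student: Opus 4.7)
The plan is to apply a Pohozaev-type argument: test the distributional form of \er{11} against a quadratic weight that captures the generator of dilations. Since $|x|^2/2$ is not compactly supported, I would use a cut-off $\varphi_R(x)=\eta_R(x)|x|^2/2$ with $\eta_R\in C_c^\infty(\R^d)$ equal to $1$ on $B_R$, supported in $B_{2R}$, and $|\nabla\eta_R|\lesssim R^{-1}$, $|\Delta\eta_R|\lesssim R^{-2}$. The distributional version of \er{11} then reads
\begin{align*}
\int_{\R^d}\Delta\varphi_R\, U_s^m\,dx+\int_{\R^d}\nabla\varphi_R\cdot U_s\nabla C_s\,dx=0.
\end{align*}
On $B_R$ we have $\Delta\varphi_R=d$ and $\nabla\varphi_R=x$, so the bulk piece is $d\int_{B_R}U_s^m\,dx+\int_{B_R}x\cdot U_s\nabla C_s\,dx$, with the remaining contributions concentrated in the annulus $B_{2R}\setminus B_R$.

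Letting $R\to\infty$, the identity should reduce to $d\int U_s^m\,dx+\int U_s(x)\,x\cdot\nabla C_s(x)\,dx=0$. The Pohozaev formula will then emerge upon rewriting the second integral via the $x\leftrightarrow y$ symmetry of the Riesz kernel. Differentiating \er{22}, one has $\nabla C_s(x)=-\int U_s(y)(x-y)|x-y|^{-(d-2s+2)}\,dy$; symmetrising the resulting double integral,
\begin{align*}
\int U_s(x)\,x\cdot\nabla C_s(x)\,dx=-\tfrac12\iint U_s(x)U_s(y)\frac{|x-y|^2}{|x-y|^{d-2s+2}}\,dx\,dy=-\tfrac{d-2s}{2}\int U_s C_s\,dx,
\end{align*}
where convergence of the double integral is ensured by the HLS pairing on $L^m\times L^m$. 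Substituting back and dividing by $d$ yields \er{Poho}.

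The hardest part will be justifying the limit $R\to\infty$ with only the regularity $U_s\in L^m(\R^d)$ in hand. The Laplacian remainder is tame: $|\Delta\varphi_R|$ is bounded uniformly in $R$, so its annular contribution is controlled by the vanishing tail $\int_{|x|>R}U_s^m\,dx\to 0$. The gradient remainder is the genuine difficulty, because $|\nabla\varphi_R|\sim|x|$ on the annulus and so the term scales like $R\int_{\{R<|x|<2R\}}U_s|\nabla C_s|\,dx$, demanding $|x|\,U_s|\nabla C_s|$ to have vanishing tails. This I would obtain via a Hölder split exploiting the weak-Young control on $\nabla C_s$ already used in Lemma \ref{uceps}. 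If this minimal-regularity route proves too delicate, a cleaner alternative is to first reduce \er{11} to $\nabla\cdot(U_s\nabla\mu)=0$ with $\mu=\frac{m}{m-1}U_s^{m-1}-C_s$, deduce $\mu\equiv\overline{C}$ on the connected open set $\Omega$ by testing against a cut-off of $\mu$, and then multiply the resulting pointwise identity by the mass-preserving dilation field $\nabla\cdot(xU_s)=dU_s+x\cdot\nabla U_s$, whose $\R^d$-integral vanishes; the constant $\overline{C}$ then drops out, and the same symmetrisation recovers \er{Poho}.
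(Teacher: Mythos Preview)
Your overall strategy---test the distributional identity against a cut-off of $|x|^2$ and exploit the $x\leftrightarrow y$ symmetry of the Riesz kernel---is exactly the paper's. The difference is purely in the order of operations, and it is precisely this that creates the difficulty you flag.

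The paper symmetrises the drift term \emph{before} sending $R\to\infty$: with the cut-off $\psi_R$ equal to $|x|^2$ on $B_R$, it writes
\[
-\int_{\R^d}\nabla\psi_R\cdot U_s\nabla C_s\,dx
=\frac12\iint_{\R^d\times\R^d}\frac{[\nabla\psi_R(x)-\nabla\psi_R(y)]\cdot(x-y)}{|x-y|^2}\,\frac{U_s(x)U_s(y)}{|x-y|^{d-2s}}\,dx\,dy.
\]
The point is that the prefactor $\frac{[\nabla\psi_R(x)-\nabla\psi_R(y)]\cdot(x-y)}{|x-y|^2}$ is \emph{uniformly bounded in $R$} (it is controlled by the Hessian of $\psi_R$, which is $O(1)$ for a quadratic cut-off), and converges pointwise to $2$. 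Since the HLS inequality with exponent $m=\tfrac{2d}{d+2s}$ gives $\iint\frac{U_s(x)U_s(y)}{|x-y|^{d-2s}}\,dx\,dy<\infty$, dominated convergence applies directly and yields $(d-2s)\int U_sC_s\,dx$. The diffusion side is handled the same way since $\Delta\psi_R$ is bounded. No annular estimate is needed at all.

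By contrast, your ordering (pass to the limit first, symmetrise afterwards) produces the annular term $R\int_{\{R<|x|<2R\}}U_s|\nabla C_s|\,dx$, and the weak-Young route you suggest does not close with only $U_s\in L^m$: weak Young gives $\nabla C_s\in L^{2d/(d-2s+2)}$, whose H\"older dual is $L^{2d/(d+2s-2)}$, strictly above $L^m$, and there is still the factor $R$ to absorb. Your fallback via the pointwise identity $\mu\equiv\overline{C}$ would work but is circular here, since in the paper the Pohozaev identity is what is used to compute $\overline{C}$ in Proposition~\ref{Fusteady}. The fix is simply to symmetrise at finite $R$ as the paper does; then the passage to the limit is immediate.
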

\begin{proof}
We follow a similar argument as done in \cite{bdp06,bp07}. Consider a cut-off function $\psi_R(x) \in C_0^\infty(\R^d)$ and $\psi_R(x)=|x|^2$ for $|x|<R, ~\psi_R(x)=0$ for $|x|\ge 2R.$ Then we compute
\begin{align}\label{33}
\int_{\R^d} \Delta U_s^m \psi_R(x) dx=\int_{\R^d} U_s^m \Delta \psi_R(x) dx
\end{align}
and
\begin{align}\label{44}
& \int_{\R^d} \psi_R(x) \nabla \cdot  (U_s \nabla C_s)dx=-\int_{\R^d} \nabla \psi_R(x) \cdot U_s \nabla C_s dx \nonumber \\
=& \frac{1}{2}\iint_{\R^d \times \R^d} \frac{[\nabla \psi_R(x)-\nabla \psi_R(y)]\cdot (x-y)}{|x-y|^2} \frac{U_s(x)U_s(y)}{|x-y|^{d-2s}} dxdy.
\end{align}
Both terms in the right-hand side of \er{33} and \er{44} are bounded (because $\Delta \psi_R(x)$ and $\frac{[\nabla \psi_R(x)-\nabla \psi_R(y)]\cdot (x-y)}{|x-y|^2}$ are bounded and $U_s\in L^m(\R^d)$). Therefore, as $R \to \infty,$ we may pass to the limit in each term using the Lebesgue monotone convergence theorem and obtain the identity. $\Box$
\end{proof}

With this identity in hand, we are now ready to make use of the variational structure of the free energy to deduce the following results.
\begin{proposition}\label{Fusteady}(Critical point of $F(u)$)
Let $\bar{u} \in L_+^1 \cap L^\infty(\R^d)$ with $\|\bar{u}\|_{L^1(\R^d)}=M$ be a global minimizer of $F(u)$. Then $\bar{u}$ satisfies the Euler-Lagrange condition
\begin{align}\label{230213}
\frac{m}{m-1} \bar{u}^{m-1}-\frac{1}{d-2s} \bar{u} \ast \frac{1}{|x|^{d-2s}}=\overline{C},~~\forall~x \in supp(\bar{u}),
\end{align}
where the constant
$$
 \overline{C}=\frac{1}{M}\left( \frac{m}{m-1}\|\bar{u}\|_{L^m(\R^d)}^m-\frac{1}{d-2s} \iint_{\R^d \times \R^d} \frac{\bar{u}(x)\bar{u}(y)}{|x-y|^{d-2s}}dxdy \right).
$$
Furthermore, such minimizer is the steady state $U_s$ of \er{fracks} in the distributional sense \er{11}. Therefore, for $m=\frac{2d}{d+2s},$ $supp(U_s)=\R^d.$
\end{proposition}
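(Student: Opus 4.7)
The plan is to carry out a standard variational analysis at $\bar u$, convert the resulting Euler--Lagrange condition into the PDE \er{11}, and then combine it with the Pohozaev identity \er{Poho} to both pin down $\overline C$ and to extend the support.

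First I would derive \er{230213}. For any $\varphi\in C_c^\infty(\R^d)$ with $\int\varphi\,dx=0$ and $\mathrm{supp}(\varphi)$ contained in the open set $\{\bar u>0\}$, the perturbation $\bar u+\varepsilon\varphi$ is a nonnegative, mass-$M$ competitor for $|\varepsilon|$ small. Computing $\tfrac{d}{d\varepsilon}\bigl|_0 F(\bar u+\varepsilon\varphi)=0$ and symmetrising the double integral gives
\[
\int_{\R^d}\Bigl(\tfrac{m}{m-1}\bar u^{m-1}-c\Bigr)\varphi\,dx=0,\qquad c=\tfrac{1}{d-2s}\,\bar u\ast|x|^{-(d-2s)}.
\]
Ranging over all such zero-mean test functions forces the bracket to equal a constant $\overline C$ a.e.\ on $\{\bar u>0\}$, which is exactly \er{230213}. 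Pairing \er{230213} with $\bar u$ and integrating (both sides vanish where $\bar u=0$) produces the announced closed form for $\overline C$.

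Next I would upgrade \er{230213} to the steady PDE \er{11}. Taking the gradient on the open set $\{\bar u>0\}$ yields $m\bar u^{m-2}\nabla\bar u=\nabla c$, i.e.\ $\nabla\bar u^m=\bar u\nabla c$, so $\Delta\bar u^m=\nabla\!\cdot\!(\bar u\nabla c)$ in $\mathcal D'(\{\bar u>0\})$. Since $\bar u\in L^\infty$ and $m>1$, both $\bar u^m$ and $\bar u\nabla c$ vanish on $\{\bar u=0\}$, so the identity extends distributionally to all of $\R^d$; this identifies $\bar u$ with a steady state $U_s$.

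For the support claim at the energy-critical exponent $m=\tfrac{2d}{d+2s}$, observe that $\tfrac{m}{m-1}=\tfrac{2d}{d-2s}$. The Pohozaev identity \er{Poho} can be rewritten as $\int_{\R^d} c\,\bar u\,dx=\tfrac{2d}{d-2s}\int_{\R^d}\bar u^m\,dx$; inserting this into the closed form for $M\overline C$ produces an exact cancellation, hence $\overline C=0$. I would then use one-sided variations $\bar u+\varepsilon(\varphi-\psi)$ with $\varphi\geq 0$ supported in $\{\bar u=0\}$ and $\psi\geq 0$ supported in $\{\bar u>0\}$ chosen so that $\int(\varphi-\psi)\,dx=0$; minimality yields the complementarity inequality $-c\geq\overline C=0$ on $\{\bar u=0\}$. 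But the Riesz convolution $c=\tfrac{1}{d-2s}\bar u\ast|x|^{-(d-2s)}$ is strictly positive everywhere since $\bar u\not\equiv 0$, so $\{\bar u=0\}$ must have measure zero and $\mathrm{supp}(U_s)=\R^d$.

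The delicate step I expect is the one-sided variation producing the complementarity condition on the zero set: one has to preserve both nonnegativity and mass simultaneously, and the map $t\mapsto t^m$ is not differentiable from the right at $t=0$ for $1<m<2$, so the right-derivative expansion for the diffusion term must be handled by hand (the interaction term being smooth and linear in $\varphi$ causes no trouble). Once that inequality is secured, $\overline C=0$ together with the strict positivity of $c$ makes the support conclusion immediate.
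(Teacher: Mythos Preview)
Your overall strategy---variational first variation $\Rightarrow$ Euler--Lagrange identity on the support $\Rightarrow$ distributional steady-state equation $\Rightarrow$ Pohozaev forces $\overline C=0$ at $m=\tfrac{2d}{d+2s}$ $\Rightarrow$ full support via positivity of $c$---matches the paper exactly through the computation of $\overline C$. The genuine divergence is in the last step.

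For the full-support conclusion the paper takes a shorter route than your one-sided variation. After establishing \er{230213}, it invokes regularity results (Lemma~2.3 and Theorem~2.2 of \cite{CHVY19}) to obtain continuity of $U_s$, and then argues directly: if $\Omega=\mathrm{supp}(U_s)\neq\R^d$, pick $x_0\in\partial\Omega$ and a sequence $x_n\to x_0$ from within $\Omega$; continuity gives $\tfrac{m}{m-1}U_s^{m-1}(x_n)\to 0$ while the Riesz convolution $C_s(x_n)$ stays bounded away from zero, contradicting $\overline C=0$. This avoids entirely the delicate issue you flagged---the right-derivative of $t\mapsto t^m$ at $t=0$ in the one-sided competitor---at the cost of importing continuity from an external reference. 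Your complementarity argument is correct and more self-contained (once you note that the diffusion contribution on $\{\bar u=0\}$ is $O(\varepsilon^m)=o(\varepsilon)$ since $m>1$, so the right-derivative there is simply $-\int c\varphi$), but it is more work than the paper's approach. A smaller difference: the paper builds its admissible perturbations explicitly as $\varphi=(\psi-\tfrac{1}{M}\int\psi\bar u\,dx)\bar u$ for arbitrary $\psi\in C_0^\infty(\Omega)$, which guarantees both the mass constraint and nonnegativity without needing a positive lower bound for $\bar u$ on $\mathrm{supp}(\varphi)$.
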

\begin{proof}
We use some ideas from \cite{BL13,CCV15,CHM18}. Set $Y_M=\left\{ u \in L_+^1 \cap L^\infty(\R^d), \|u\|_{L^1(\R^d)}=M  \right\}$. Let $\bar{u} \in Y_M$ be a global minimizer of $F(u)$ with $\Omega=supp(\bar{u}).$ For any $\psi \in C_0^\infty(\Omega),$ denoting
\begin{align}\label{varphi}
\varphi=\left( \psi-\frac{1}{M} \int_{\Omega} \psi \bar{u} dx \right) \bar{u}(x),
\end{align}
then $supp (\varphi) \subseteq \Omega$ and $\int_{\Omega} \varphi dx=0.$ Besides, there exists
\begin{align*}
\varepsilon_0: = \frac{ \displaystyle \min_{y \in  supp\varphi} \bar{u}(y)} { \displaystyle \max_{y \in supp \varphi} \left| \varphi (y) \right| }  >0
\end{align*}
such that $\bar{u}+\varepsilon \varphi \ge 0$ in $\Omega$ for $0<\varepsilon<\varepsilon_0.$ Now $\bar{u}$ is a global minimizer of $F(u)$ in $\Omega$ if and only if
\begin{align}
\frac{d}{d \varepsilon} \Big |_{\varepsilon=0}F(\bar{u}+\varepsilon \varphi)=0.
\end{align}
The above definition implies that for any $\int_{\Omega} \varphi(x)dx=0$
\begin{align}\label{3star}
\int_{\Omega} \left( \frac{m}{m-1} \bar{u}^{m-1}-\frac{1}{d-2s} \int_{\R^d} \frac{\bar{u}(y)}{|x-y|^{d-2s}}dy \right) \varphi(x) dx=0.
\end{align}
Then by the definition \er{varphi} of $\varphi,$ \er{3star} can be read as
\begin{align}
\int_{\Omega} \left( \frac{m}{m-1} \bar{u}^{m-1}-\frac{1}{d-2s} \int_{\R^d} \frac{\bar{u}(y)}{|x-y|^{d-2s}} dy -\overline{C} \right) \bar{u} \psi dx=0,~~\mbox{for any}~\psi \in C_0^\infty(\Omega).
\end{align}
This gives
\begin{align}\label{2tri}
\frac{m}{m-1} \bar{u}^{m-1}-\frac{1}{d-2s} \int_{\R^d} \frac{\bar{u}(y)}{|x-y|^{d-2s}} dy=\overline{C},~~a.e.~~\mbox{in}~\Omega.
\end{align}
Since $\bar{u} \in L_+^1 \cap L^\infty(\R^d),$ we have from similar arguments of \cite[Lemma 2.3]{CHVY19} that $\bar{u}$ is continuous in $\Omega$. Owning the regularity we arrive at
\begin{align}\label{fivestar}
\nabla \cdot \left( \nabla \bar{u}^m-\frac{1}{d-2s} \bar{u} \nabla  \bar{u} \ast \frac{1}{|x|^{d-2s}} \right)=\nabla \cdot \left(\bar{u} \nabla \left( \frac{m}{m-1}\bar{u}^{m-1}-\frac{1}{d-2s} \bar{u} \ast \frac{1}{|x|^{d-2s}}  \right) \right)=0
\end{align}
in the sense of distribution. Therefore, all global minimizers of the free energy are stationary solutions of \er{fracks} in the distributional sense characterized by \er{fivestar}. Then by employing Theorem 2.2 of \cite{CHVY19}, the steady states must be radially decreasing up to translations.

Now we declare that the steady state $U_s$ of \er{fracks} satisfies
\begin{align}\label{313}
\frac{m}{m-1} U_s^{m-1}-\frac{1}{d-2s} \int_{\R^d}   \frac{U_s(y)}{|x-y|^{d-2s}} dy=\overline{C},~~\forall~x \in \Omega=supp(U_s).
\end{align}
Keeping \er{Poho} in mind, taking inner product to \er{313} by $U_s$ in $\R^d$ yields
\begin{align}
\overline{C}=\frac{1}{M} \left( \frac{1}{m-1}-\frac{d+2s}{d-2s} \right) \int_{\R^d} U_s^m dx.
\end{align}
When $m=\frac{2d}{d+2s}, \overline{C}=0,$ thus from \er{313} we claim that $\Omega=\R^d.$ If not, choose a point $x_0 \in \partial \Omega,$ as we take a sequence of point $x_n \to x_0$ with $x_n \in \Omega,$ we have that $\frac{m}{m-1} U_s^{m-1}(x_n) \to 0,$ whereas the sequence $\frac{1}{d-2s} \int_{\R^d} \frac{U_s(y)}{|x_n-y|^{d-2s}} dy> 0$, a contradiction. $\Box$
\end{proof}

Now we are ready to assert that the steady states of \er{fracks} can be expressed explicitly. By Proposition \ref{Fusteady} one has that
\begin{align}
\left\{
  \begin{array}{ll}
    \frac{m}{m-1} U_s^{m-1}-C_s=0,~~\mbox{in}~~\R^d, \\
    C_s=\frac{1}{d-2s} U_s \ast \frac{1}{|x|^{d-2s}}.
  \end{array}
\right.
\end{align}
Hence $C_s$ is endowed with the following integral equation
\begin{align}\label{866}
C_s(x)& =\frac{1}{d-2s}\left( \frac{m-1}{m} \right)^{\frac{1}{m-1}} \int_{\R^d} \frac{C_s(y)^{\frac{1}{m-1}}}{|x-y|^{d-2s}} dy \nonumber \\
& =\frac{1}{d-2s}\left( \frac{d-2s}{2d} \right)^{\frac{d+2s}{d-2s}} \int_{\R^d} \frac{C_s(y)^{\frac{d+2s}{d-2s}}}{|x-y|^{d-2s}} dy.
\end{align}
It has been proved in \cite{CLO06} that every positive solution of \er{866} is radially symmetric and decreasing about some point $x_0 \in \R^d$ and therefore assumes the form
\begin{align}
C_s(x)= c \left(\frac{\lambda}{\lambda^2+|x-x_0|^2}\right)^{\frac{d-2s}{2}}
\end{align}
with some positive constants $c$ and $\lambda.$ It gives our final estimates for this part.
\begin{proposition}\label{Steadyexpression}
For $m=\frac{2d}{d+2s},$ any steady state to \er{fracks} is radially symmetric up to translation and given by a one-parameter family for some $\lambda>0$ and $c>0$
\begin{align}
U_s(x)= c \left(\frac{\lambda}{\lambda^2+|x-x_0|^2}\right)^{\frac{d+2s}{2}} \end{align}
with $\int_{\R^d}U_s^m(x)dx=C(d).$
\end{proposition}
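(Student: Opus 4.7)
The proof essentially assembles the pieces already in place. The plan is to derive a scalar integral equation for $C_s$ alone, invoke the Chen--Li classification to pin down its shape, then algebraically recover $U_s$ and compute the $L^m$ integral explicitly.

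First I would start from Proposition \ref{Fusteady}, which (for the energy critical exponent $m=\frac{2d}{d+2s}$) yields $\overline C=0$ and $\mathrm{supp}(U_s)=\R^d$, so that the pointwise Euler--Lagrange identity $\frac{m}{m-1}U_s^{m-1}=C_s$ holds on all of $\R^d$. Solving algebraically gives
\begin{align*}
U_s(x)=\left(\frac{m-1}{m}\,C_s(x)\right)^{\frac{1}{m-1}}=\left(\frac{d-2s}{2d}\right)^{\frac{d+2s}{d-2s}}C_s(x)^{\frac{d+2s}{d-2s}},
\end{align*}
since $\frac{1}{m-1}=\frac{d+2s}{d-2s}$. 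Substituting this into the Riesz representation $C_s=\frac{1}{d-2s}U_s\ast|x|^{-(d-2s)}$ produces the closed integral equation \eqref{866} for $C_s$ in isolation.

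Second, I would apply the Chen--Li--Ou classification of positive solutions to this Hardy--Littlewood--Sobolev type integral equation (\cite{CLO06,CLO1,CLO2}), which is cited already in the excerpt. Their moving-plane method on the integral equation shows that every positive solution is radially symmetric and decreasing about some point $x_0\in\R^d$ and must take the form
\begin{align*}
C_s(x)=c_0\left(\frac{\lambda}{\lambda^2+|x-x_0|^2}\right)^{\frac{d-2s}{2}}
\end{align*}
for some $c_0,\lambda>0$. This is the one genuinely external input; if one wanted a self-contained argument the key obstacle would be reproducing the moving plane analysis for the integral equation, but here we just cite it. Back-substitution into the algebraic relation above, absorbing constants into a single positive constant $c$, yields the claimed family
\begin{align*}
U_s(x)=c\left(\frac{\lambda}{\lambda^2+|x-x_0|^2}\right)^{\frac{d+2s}{2}}.
\end{align*}

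Finally, I would verify that $\int_{\R^d}U_s^m\,dx$ depends only on $d$. Since $m=\frac{2d}{d+2s}$, raising $U_s$ to the $m$-th power gives the exponent $\frac{(d+2s)m}{2}=d$, so
\begin{align*}
\int_{\R^d}U_s^m\,dx=c^m\int_{\R^d}\left(\frac{\lambda}{\lambda^2+|x-x_0|^2}\right)^{d}dx.
\end{align*}
The change of variables $y=(x-x_0)/\lambda$ makes the $\lambda$ and $x_0$ dependence disappear entirely, leaving $c^m\int_{\R^d}(1+|y|^2)^{-d}\,dy$, a dimensional constant $C(d)$ already evaluated in \eqref{Lm}. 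This completes the proposition. $\Box$
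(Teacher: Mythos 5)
Your proposal is correct and follows essentially the same route as the paper: invoke Proposition \ref{Fusteady} to get $\overline{C}=0$ and full support, eliminate $U_s$ to obtain the closed integral equation \eqref{866} for $C_s$, cite the Chen--Li--Ou classification for its explicit form, and back-substitute. Your final scaling computation showing $\int_{\R^d}U_s^m\,dx$ is independent of $\lambda$ and $x_0$ reproduces the paper's calculation in \eqref{Lm}.
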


\section{Proof of Theorem \ref{main}} \label{Proofmain}

With the aid of the characterization of steady states and the free energy, we can proceed to distinguish the two cases $\|u_0\|_{L^m(\R^d)}<\|U_s\|_{L^m(\R^d)}$ and $\|u_0\|_{L^m(\R^d)}>\|U_s\|_{L^m(\R^d)}$ to show $\|u(\cdot,t)\|_{L^m(\R^d)}$ can be bounded from below or above separately, then we establish the global existence and finite time blow-up of solutions to \er{fracks}.

\subsection{Proof of the finite time blow-up (ii) of Theorem \ref{main}}

In this subsection, we start with the case $\|u_0\|_{L^m(\R^d)}>\|U_s\|_{L^m(\R^d)}$ and show the finite time blow-up of the weak solution to \er{fracks}. Firstly we prepare a priori bound.
\begin{lemma}\label{blowupbdd}(A priori bound)
Under assumption \er{12}, if the initial free energy $F(u_0)<F(U_s)$ and $\|u_0\|_{L^m(\R^d)}>\|U_s\|_{L^m(\R^d)}$, then there exists a constant $\mu>1$ such that the corresponding free energy solution $u$ satisfies that for any $t>0$
\begin{align}
\|u(\cdot,t)\|_{L^m(\R^d)} > \mu \|U_s\|_{L^m(\R^d)}.
\end{align}
\end{lemma}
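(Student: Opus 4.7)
The plan is to combine the sharp HLS inequality with the free energy dissipation \eqref{Fuequality} and a continuity-in-time argument to trap $\|u(\cdot,t)\|_{L^m(\R^d)}$ strictly above $\|U_s\|_{L^m(\R^d)}$ for every $t\ge 0$.

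I would first apply the HLS inequality with $\beta=d-2s$ and $q=q_1=\frac{2d}{d+2s}=m$ to the interaction term in $F(u)$, which gives
\[
\iint_{\R^d\times\R^d}\frac{u(x)u(y)}{|x-y|^{d-2s}}\,dxdy\;\le\;C(d,d-2s)\,\|u\|_{L^m(\R^d)}^2.
\]
By \eqref{fhds} the stationary state $U_s$ is an extremizer, so $C(d,d-2s)$ can be read off $U_s$ itself: combining the Euler--Lagrange identity $C_s=\frac{m}{m-1}U_s^{m-1}$ of Proposition \ref{Fusteady} (where $\overline{C}=0$) with \eqref{CC} and integrating against $U_s$ yields
\[
\iint_{\R^d\times\R^d}\frac{U_s(x)U_s(y)}{|x-y|^{d-2s}}\,dxdy\;=\;\frac{(d-2s)m}{m-1}\,\|U_s\|_{L^m(\R^d)}^m,
\]
so $C(d,d-2s)=\frac{(d-2s)m}{m-1}\|U_s\|_{L^m(\R^d)}^{m-2}$. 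Plugging this back into $F$ produces the key functional inequality
\[
F(u)\;\ge\;\frac{\|u\|_{L^m(\R^d)}^m}{m-1}-\frac{m}{2(m-1)}\,\|U_s\|_{L^m(\R^d)}^{m-2}\,\|u\|_{L^m(\R^d)}^2\;=:\;g\bigl(\|u\|_{L^m(\R^d)}\bigr).
\]

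Next I would analyse the scalar function $g$ on $[0,\infty)$. Because $1<m<2$, its derivative $g'(z)=\frac{mz}{m-1}\bigl(z^{m-2}-\|U_s\|_{L^m(\R^d)}^{m-2}\bigr)$ vanishes only at $z=\|U_s\|_{L^m(\R^d)}$, is positive below it and negative above it; hence $g$ has a unique global maximum there equal to $\frac{2-m}{2(m-1)}\|U_s\|_{L^m(\R^d)}^m=F(U_s)$, using \eqref{FUs}, and $g(z)\to-\infty$ as $z\to\infty$. Since $F(u_0)<F(U_s)=\max g$, there is a unique $z_\star>\|U_s\|_{L^m(\R^d)}$ with $g(z_\star)=F(u_0)$, and a connected component $I\ni\|U_s\|_{L^m(\R^d)}$ of $\{g>F(u_0)\}$ whose right endpoint is $z_\star$. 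The hypothesis $\|u_0\|_{L^m(\R^d)}>\|U_s\|_{L^m(\R^d)}$ together with $g(\|u_0\|_{L^m(\R^d)})\le F(u_0)$ forces $\|u_0\|_{L^m(\R^d)}\ge z_\star$; by \eqref{Fuequality} the same reasoning applied at time $t$ gives $\|u(\cdot,t)\|_{L^m(\R^d)}\notin I$ for every $t\in(0,T_w)$. Continuity of $t\mapsto\|u(\cdot,t)\|_{L^m(\R^d)}$ then prevents it from jumping across $I$, so it stays in $[z_\star,\infty)$, and any $\mu\in\bigl(1,\,z_\star/\|U_s\|_{L^m(\R^d)}\bigr)$ closes the argument.

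The main obstacle is precisely this continuity-in-time step, since the weak/free-energy solution is built only as a limit of smooth $u_\varepsilon$. The cleanest route is to run the whole barrier argument first at the level of $u_\varepsilon$, where $\|u_\varepsilon(\cdot,t)\|_{L^m(\R^d)}$ is genuinely continuous and the energy identity \eqref{doublestar} holds up to an $O(\varepsilon)$ error, and then pass to the limit $\varepsilon\to 0$ using the strong convergence \eqref{conver1} and the uniform bounds of Section \ref{Existence}. One has to check that $z_\star$ depends continuously on $F(u_0)$ (and on the sharp HLS constant) so that the barrier survives the limit; everything else reduces to the explicit computations indicated above.
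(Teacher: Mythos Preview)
Your approach is essentially the same as the paper's: apply the sharp HLS inequality to bound $F(u)$ below by a one-variable function of $\|u\|_{L^m}$, identify its unique maximum at $\|U_s\|_{L^m}$ with value $F(U_s)$, and then use the energy decay $F(u(\cdot,t))\le F(u_0)<F(U_s)$ to trap the norm strictly above $\|U_s\|_{L^m}$. The paper parametrises the auxiliary function by $x=\|u\|_{L^m}^m$ rather than $z=\|u\|_{L^m}$ and does not compute the sharp constant explicitly, but this is cosmetic.

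One point worth noting: the continuity-in-time step you flag as the main obstacle is in fact \emph{not} addressed in the paper's own proof --- the paper simply asserts that since $g$ is strictly decreasing to the right of its maximum and $\|u_0\|_{L^m}^m>\|U_s\|_{L^m}^m$, the solution stays on that branch, without explaining why it cannot jump to the left branch of the sublevel set. Your proposed cure (run the barrier on the smooth approximants $u_\varepsilon$, where $t\mapsto\|u_\varepsilon(\cdot,t)\|_{L^m}$ is continuous, and pass to the limit via \eqref{conver1}) is the natural way to fill this gap and goes slightly beyond what the paper provides.
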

\begin{proof}
Firstly letting
\begin{align*}
f=u(x), h=u(y), \beta=d-2s, q=q_1=\frac{2d}{2d-\beta}
\end{align*}
in the HLS inequality \er{fh}, we can infer from the expression of $F(u)$ that
\begin{align}\label{111}
F(u) &=\frac{1}{m-1} \|u\|_{L^m(\R^d)}^m -\frac{1}{2(d-2s)} \iint_{\R^d \times \R^d} \frac{u(x)u(y)}{|x-y|^{d-2s}} dxdy \nonumber\\
& \ge \frac{1}{m-1} \|u\|_{L^m(\R^d)}^m-\frac{C(d,\beta)}{2(d-2s)} \|u\|_{L^m(\R^d)}^2.
\end{align}
While \er{fhds} also tells us that
\begin{align}\label{222}
F(U_s)=\frac{1}{m-1} \|U_s\|_{L^m(\R^d)}^m-\frac{C(d,\beta)}{2(d-2s)} \|U_s\|_{L^m(\R^d)}^2.
\end{align}
We define an auxiliary function
$$
g(x)=\frac{1}{m-1}x-\frac{C(d,\beta)}{2(d-2s)} x^{2/m}.
$$
It can be seen from \er{111} and \er{222} that
$$
g \left( \|U_s\|_{L^m(\R^d)}^m \right)=F(U_s)\ge g\left( \|u\|_{L^m(\R^d)} ^m \right)
$$
for all $u \in L^m(\R^d).$ Namely, the maximum point $x_0$ of $g(x)$ is attained by
\begin{align}\label{333}
x_0=\left( \frac{m(d-2s)}{(m-1)C(d,\beta) } \right)^{\frac{m}{2-m}}=\|U_s\|_{L^m(\R^d)}^m.
\end{align}
In addition, in the case that $F(u_0)<F(U_s),$ there is a $0<\delta<1$ such that
\begin{align}\label{delta}
F(u_0)<\delta F(U_s).
\end{align}
Thus, by the monotonicity of $F(u)$ with respect to time, it follows that
\begin{align}\label{444}
g\left( \|u\|_{L^m(\R^d)}^m \right) \le F(u) \le F(u_0)<\delta F(U_s)=\delta g\left( \|U_s\|_{L^m(\R^d)}^m \right).
\end{align}
Therefore, for any $x>\|U_s\|_{L^m(\R^d)}^m, g(x)$ is strictly decreasing and it has a strictly decreasing inverse function $g^{-1}.$ So if $\|u_0\|_{L^m(\R^d)}^m>\|U_s\|_{L^m(\R^d)}^m,$ one has that for some $\mu>1$ depending on $\delta,$
\begin{align}\label{555}
\|u(\cdot,t)\|_{L^m(\R^d)}^m>\mu \|U_s\|_{L^m(\R^d)}^m.
\end{align}
This completes this lemma. $\Box$
\end{proof}

When $\|u_0\|_{L^m(\R^d)}<\|U_s\|_{L^m(\R^d)}$, following the lines \er{111}-\er{444} in the proof of Lemma \ref{blowupbdd}, we can deduce the following reverse result.
\begin{corollary}\label{coro1}
Under assumption \er{12}, if the initial free energy $F(u_0)<F(U_s)$ and $\|u_0\|_{L^m(\R^d)}<\|U_s\|_{L^m(\R^d)}$, then there exists a constant $\mu_1<1$ such that the corresponding free energy solution $u$ satisfies that for any $t>0$
\begin{align}
\|u(\cdot,t)\|_{L^m(\R^d)} < \mu_1 \|U_s\|_{L^m(\R^d)}.
\end{align}
\end{corollary}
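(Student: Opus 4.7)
The plan is to run the argument of Lemma \ref{blowupbdd} verbatim, but to exploit the \emph{increasing} branch of the auxiliary function
$$g(x) = \frac{1}{m-1}x - \frac{C(d,\beta)}{2(d-2s)}x^{2/m}$$
instead of the decreasing one. First I would reuse the HLS inequality \er{fh} and the identity \er{FUs} to obtain the lower bound $F(u(\cdot,t)) \geq g(\|u(\cdot,t)\|_{L^m(\R^d)}^m)$, together with $F(U_s) = g(x_0)$ where $x_0 := \|U_s\|_{L^m(\R^d)}^m$ is the unique maximum of $g$ on $(0,\infty)$ by \er{333}. Since $m \in (1,2)$, the function $g$ is concave, vanishes at the origin, strictly increases on $(0,x_0)$ up to the value $g(x_0) = F(U_s) > 0$, and strictly decreases on $(x_0,\infty)$ to $-\infty$.

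Next, as in \er{delta}, the hypothesis $F(u_0) < F(U_s)$ supplies a $\delta \in (0,1)$ with $F(u_0) \leq \delta F(U_s)$, and the monotonicity of the free energy established in Proposition \ref{energy} yields
$$g\bigl(\|u(\cdot,t)\|_{L^m(\R^d)}^m\bigr) \leq F(u(\cdot,t)) \leq F(u_0) \leq \delta \, g(x_0)$$
for every $t$ in the existence interval. The key observation is then that $\|u(\cdot,t)\|_{L^m(\R^d)}^m$ cannot cross the threshold $x_0$: by the continuity in time of this norm (which comes from the strong convergence \er{conver1} of the regularized sequence together with the local estimates \er{Lr} of Theorem \ref{ueps}), any crossing would force the equality case $g(x_0) \leq \delta\, g(x_0)$, contradicting $g(x_0) > 0$ and $\delta < 1$. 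Starting from $\|u_0\|_{L^m(\R^d)}^m < x_0$, the norm therefore stays in the strictly increasing branch of $g$ for all time.

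On that branch $g$ is a strictly increasing bijection from $[0,x_0]$ onto $[0,g(x_0)]$, so the displayed inequality may be inverted to give
$$\|u(\cdot,t)\|_{L^m(\R^d)}^m \leq g^{-1}\bigl(\delta g(x_0)\bigr) < x_0,$$
and setting $\mu_1 := \bigl(g^{-1}(\delta g(x_0))/x_0\bigr)^{1/m} \in (0,1)$ delivers the claim. The only genuinely non-routine point I expect is justifying the time-continuity of $\|u(\cdot,t)\|_{L^m(\R^d)}$ used to prevent a jump across $x_0$; this should follow by combining the uniform $L^r$-bound \er{Lr} on the approximations with the convergence \er{conver1} and a standard Aubin--Lions argument as in \cite{BL14,suku06}. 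Everything else is a direct translation of the blow-up argument.
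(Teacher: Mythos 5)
Your proof is correct and takes essentially the same route the paper intends: the paper's own ``proof'' of Corollary \ref{coro1} is just the remark that one reruns lines \er{111}--\er{444} of Lemma \ref{blowupbdd} on the increasing branch of $g$, which is exactly what you do. Your explicit handling of the time-continuity of $\|u(\cdot,t)\|_{L^m(\R^d)}$, needed to rule out a jump across the forbidden interval around $x_0=\|U_s\|_{L^m(\R^d)}^m$ where $g>\delta g(x_0)$, addresses a step the paper leaves implicit and is the right thing to insist on.
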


Now we can utilize the standard argument relying on the evolution of the second moment of solutions as originally done in \cite{JL92} to prove the finite time blow-up result in Theorem \ref{main}.
\begin{proposition}(Finite time blow-up)
Assume $\int_{\R^d} |x|^2 u_0(x) dx<\infty, F(u_0)<F(U_s)$ and $\|u_0\|_{L^m(\R^d)}>\|U_s\|_{L^m(\R^d)}$. Let $u(x,t)$ be a weak free energy solution to \er{fracks} on $[0,T_w),$ then it satisfies
\begin{align}
\frac{d}{dt}\int_{\R^d} |x|^2 u(x,t) dx=-4s \int_{\R^d} u^m dx+2(d-2s)F(u),~0<t<T_w.
\end{align}
Here $T_w<\infty$ and $\|u\|_{L^m(\R^d)}$ blows up in finite time.
\end{proposition}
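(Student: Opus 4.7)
The plan is to derive the second-moment identity first and then combine it with the a priori lower bound from Lemma \ref{blowupbdd} to force the nonnegative quantity $\int |x|^2 u\,dx$ to become negative in finite time.

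First, I would establish the identity by testing the weak formulation \er{weak} against a cutoff $\psi_R(x)$ with $\psi_R(x)=|x|^2$ on $|x|<R$ (as in the Pohozaev proof) and sending $R\to\infty$, justified by $\int |x|^2 u_0\,dx<\infty$ together with propagation of the second moment in time (which follows from the regularized approximation \er{kseps} and the uniform $L^\infty$ control from Proposition \ref{energy}). The Laplacian-on-$|x|^2$ term gives $2d\int u^m\,dx$, while the drift term, after symmetrization $x\mapsto(x+y)/2$, yields
\begin{align*}
-\tfrac{1}{2}\iint_{\R^d\times\R^d} \frac{[\nabla|x|^2-\nabla|y|^2]\cdot(x-y)}{|x-y|^2}\,\frac{u(x)u(y)}{|x-y|^{d-2s}}\,dxdy = -\iint_{\R^d\times\R^d}\frac{u(x)u(y)}{|x-y|^{d-2s}}\,dxdy.
\end{align*}

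Second, I would eliminate the double integral using the free energy $F(u)$. Solving $F(u)=\frac{1}{m-1}\int u^m - \frac{1}{2(d-2s)}\iint\frac{u(x)u(y)}{|x-y|^{d-2s}}$ for the convolution term and substituting, the coefficient in front of $\int u^m\,dx$ becomes $2d-\frac{2(d-2s)}{m-1}$. Using $m=\frac{2d}{d+2s}$, hence $m-1=\frac{d-2s}{d+2s}$, this coefficient collapses to $2d-2(d+2s)=-4s$, and the remaining free-energy piece contributes $+2(d-2s)F(u)$, which is precisely the asserted identity.

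Third, for the blow-up, I would exploit monotonicity $F(u(\cdot,t))\le F(u_0)$ and Lemma \ref{blowupbdd}, which supplies $\mu>1$ with $\|u(\cdot,t)\|_{L^m(\R^d)}^m>\mu\|U_s\|_{L^m(\R^d)}^m$ for all $t\in(0,T_w)$. Combined with the identity \er{FUs} in the form $\|U_s\|_{L^m(\R^d)}^m=\frac{d-2s}{2s}F(U_s)$, this gives
\begin{align*}
\frac{d}{dt}\int_{\R^d}|x|^2 u\,dx \le -4s\mu\|U_s\|_{L^m(\R^d)}^m+2(d-2s)F(u_0)=2(d-2s)\bigl(F(u_0)-\mu F(U_s)\bigr).
\end{align*}
Since $F(U_s)>0$ and $F(u_0)<F(U_s)<\mu F(U_s)$, the right-hand side is a strictly negative constant, say $-K<0$. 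Integrating, $\int|x|^2 u(\cdot,t)\,dx \le \int|x|^2 u_0\,dx - Kt$, which contradicts nonnegativity for $t$ large enough. Hence $T_w<\infty$, and the blow-up criterion of Theorem \ref{ueps} yields $\|u(\cdot,t)\|_{L^m(\R^d)}\to\infty$ as $t\to T_w$.

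The main technical obstacle is the rigorous justification of the second-moment identity: the test function $|x|^2$ is not in $C_0^\infty$, so one must either truncate it and control the tail (using the uniform bounds $\|u_\varepsilon\|_{L^1\cap L^\infty}\le C$, the HLS inequality on the convolution tail, and weak lower semicontinuity to pass to the limit) or work entirely on the regularized equation \er{kseps} and pass $\varepsilon\to 0$ using the convergence \er{conver1} and the $\varepsilon\to 0$ vanishing of the extra mollification contribution analogous to \er{230207}. Once this identity is in hand, the blow-up argument itself is routine.
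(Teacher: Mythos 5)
Your proposal is correct and follows essentially the same route as the paper: the same virial identity via symmetrization, the same algebra collapsing the coefficient to $-4s$ using $m-1=\frac{d-2s}{d+2s}$, and the same combination of Lemma \ref{blowupbdd}, the monotonicity of $F$, and $F(U_s)=\frac{2s}{d-2s}\|U_s\|_{L^m(\R^d)}^m$ to make $\frac{d}{dt}m_2$ bounded above by a negative constant. The only cosmetic difference is the last step: you conclude $T_w<\infty$ from the nonnegativity of $m_2$ and invoke the blow-up criterion of Theorem \ref{ueps}, whereas the paper instead runs an explicit H\"older interpolation between $\|u\|_{L^1}$, $\|u\|_{L^m}$ and $m_2(t)$ to exhibit $\limsup\|u\|_{L^m}=\infty$ directly; both are valid.
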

\begin{proof}
Here we show the formal computation, the passing to the limit from the approximated problem \er{kseps} can be done on account of \cite[Theorem 2.11]{BL13} and \cite[Lemma 6.2]{S06} without any further complication. By integrating by parts in \er{fracks} and symmetrising the second term, the time derivative of the second moment of the weak solution is endowed with
\begin{align}
\frac{d}{dt}m_2(t) &=\frac{d}{dt} \int_{\R^d} |x|^2 u(x,t) dx=2d \int_{\R^d} u^m dx-\iint_{\R^d \times \R^d} \frac{u(x,t)u(y,t)}{|x-y|^{d-2s}} dxdy \nonumber \\
&=\left( 2d-\frac{2}{m-1}(d-2s) \right)\int_{\R^d} u^m dx+2(d-2s)F(u) \nonumber \\
&=-4s \int_{\R^d} u^m dx+2(d-2s) F(u).
\end{align}
Then applying Lemma \ref{blowupbdd} with some $\mu>1$ and the decreasing of $F(u)$ in time indicates
\begin{align}
\frac{d}{dt} \int_{\R^d} |x|^2 u(x,t) dx & \le -4s \mu \|U_s\|_{L^m(\R^d)}^m+2(d-2s) F(u_0) \nonumber \\
&< -4s \mu \|U_s\|_{L^m(\R^d)}^m+2(d-2s) F(U_s) \nonumber \\
&=4s (1-\mu) \|U_s\|_{L^m(\R^d)}^m<0,
\end{align}
where we have used the identity  $F(U_s)=\frac{2s}{d-2s}\|U_s\|_{L^m(\R^d)}^m$ which can be derived by
\begin{align}
F(U_s) &=\frac{1}{m-1} \int_{\R^d} U_s^m dx-\frac{1}{2} \int_{\R^d} U_s C_s dx \nonumber \\
&= \frac{1}{m-1} \int_{\R^d} U_s^m dx-\frac{m}{2(m-1)} \int_{\R^d} U_s^m dx \nonumber \\
& =\frac{2-m}{2(m-1)} \int_{\R^d} U_s^m dx=\frac{2s}{d-2s}\|U_s\|_{L^m(\R^d)}^m
\end{align}
due to $C_s=\frac{m}{m-1}U_s^{m-1}.$ Thus there exists a $T>0$ such that $\displaystyle \lim_{t \to T} m_2(t)=0$.

On the other hand, it follows from the H\"{o}lder inequality that
\begin{align}
&\int_{\R^d} u(x) dx=\int_{|x| \le R} u(x) dx+\int_{|x|>R} u(x) dx \nonumber \\
\le & C R^{(m-1)d/m} \|u\|_{L^m(\R^d)} +\frac{1}{R^2} m_2(t).
\end{align}
Choosing $R=\left( \frac{C m_2(t)}{\|u\|_{L^m(\R^d)}} \right)^{\frac{1}{a+2}}$ with $a=\frac{(m-1)d}{m}$ gives
\begin{align}
\|u_0\|_{L^1(\R^d)}=\|u\|_{L^1(\R^d)} \le c \|u\|_{L^m(\R^d)}^{\frac{2}{a+2}} m_2(t)^{\frac{a}{a+2}}.
\end{align}
It follows that
\begin{align}
\displaystyle \limsup_{t \to T} \|u(\cdot,t)\|_{L^m(\R^d)} \ge \displaystyle \lim_{t \to T} \frac{\|u_0\|_{L^1(\R^d)}^{(a+2)/2}}{c m_2(t)^{a/2}}=\infty.
\end{align}
Thus the proof is completed. $\Box$
\end{proof}

\subsection{Proof of the global existence (i) of Theorem \ref{main}}

A direct consequence of Theorem \ref{ueps} and Corollary \ref{coro1} is the following global existence result.
\begin{proposition}(Global existence)
Under assumption \er{12}, assume $F(u_0)<F(U_s)$ and $\|u_0\|_{L^m(\R^d)}<\|U_s\|_{L^m(\R^d)}$, then there exists a free energy solution to \er{fracks} on $[0,\infty).$
\end{proposition}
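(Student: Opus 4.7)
The plan is to short-circuit the local theory of Section \ref{Existence} with the a priori $L^m$ bound of Corollary \ref{coro1}: if the $L^m$ norm cannot escape to infinity then, by the blow-up criterion, the maximal existence time must be infinite.

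First I would apply Theorem \ref{ueps} to obtain a maximal existence time $T_w\in(0,\infty]$ and a weak solution $u$ to \er{fracks} on $[0,T_w)$. Along the approximating family $u_\varepsilon$ from \er{kseps}, Step 3 of the proof of Theorem \ref{ueps} produces a uniform $L^\infty$ bound on every compact subinterval of $[0,T_w)$, so the hypothesis of Proposition \ref{energy} is met and $u$ is a free energy solution on $[0,T_w)$. In particular, the energy inequality \er{Fuequality} holds, which together with $F(u_0)<F(U_s)$ gives the monotone control $F(u(\cdot,t))\le F(u_0)<F(U_s)$ throughout $[0,T_w)$.

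Next I would run Corollary \ref{coro1} on this free energy solution. The HLS inequality with the optimizer $U_s$ encodes $F$ through the auxiliary function $g(x)=\tfrac{1}{m-1}x-\tfrac{C(d,\beta)}{2(d-2s)}x^{2/m}$, whose unique maximum is attained at $x_0=\|U_s\|_{L^m(\R^d)}^m$. Since $\|u_0\|_{L^m(\R^d)}<\|U_s\|_{L^m(\R^d)}$ and $F(u(\cdot,t))$ stays strictly below $F(U_s)$, continuity of $t\mapsto\|u(\cdot,t)\|_{L^m(\R^d)}$ forces the $L^m$ norm to remain on the decreasing branch of $g$, yielding a constant $\mu_1<1$ with
\[
\|u(\cdot,t)\|_{L^m(\R^d)}<\mu_1\|U_s\|_{L^m(\R^d)}\quad\text{for every }t\in[0,T_w).
\]

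Finally I would contradict the blow-up criterion: Theorem \ref{ueps} asserts that $T_w<\infty$ would force $\|u(\cdot,t)\|_{L^m(\R^d)}\to\infty$ as $t\to T_w$, which is incompatible with the uniform bound just established. Therefore $T_w=\infty$ and $u$ is a global free energy solution to \er{fracks}.

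The delicate point I expect is the compatibility check between Theorem \ref{ueps} and Proposition \ref{energy}: the latter requires an $L^\infty$ bound on the approximating sequence on the whole existence window, and one must verify that the Moser-iteration bound in Step 3 of Theorem \ref{ueps} persists as long as $\|u_\varepsilon(\cdot,t)\|_{L^m}$ does not blow up. Once this is in place, the chain Theorem \ref{ueps}~$\Rightarrow$~Proposition \ref{energy}~$\Rightarrow$~Corollary \ref{coro1}~$\Rightarrow$~absence of $L^m$ blow-up closes without circularity and yields the claim.
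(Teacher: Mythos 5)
Your argument is correct and is essentially the paper's own proof: local existence and the blow-up criterion from Theorem \ref{ueps}, the free energy inequality via Proposition \ref{energy}, the a priori bound $\|u(\cdot,t)\|_{L^m(\R^d)}<\mu_1\|U_s\|_{L^m(\R^d)}$ from Corollary \ref{coro1}, and the contradiction forcing $T_w=\infty$. (One cosmetic slip: for $\|u_0\|_{L^m(\R^d)}^m<x_0$ the $L^m$ norm is confined by continuity to the \emph{increasing} branch of $g$, to the left of its maximum, not the decreasing one; otherwise your extra care about compatibility between Theorem \ref{ueps} and Proposition \ref{energy} is a welcome addition the paper leaves implicit.)
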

\begin{proof}
By Theorem \ref{ueps}, there are $T_w$ and a free energy solution to \er{fracks} in $[0,T_w)$ with initial data $u_0.$ We then infer from Corollary \ref{coro1} that there exists a $\mu_1<1$, for all $t \in [0,T_w)$,
$$
\|u(\cdot,t)\|_{L^m(\R^d)} < \mu_1 \|U_s\|_{L^m(\R^d)}.
$$
Therefore, we can deduce that $u \in L^\infty \left(0,\min(T,T_w);L^m(\R^d)\right)$ for every $T>0$ which implies that $T_w=\infty$ by Theorem \ref{ueps}. $\Box$
\end{proof}

\section{Conclusions}\label{conclu}

This paper concerns equation \er{fracks} for the energy critical exponent $m=\frac{2d}{d+2s}$. It is proved that any steady states $U_s(x)$ of \er{fracks} is radially symmetric and decreasing about some point $x_0 \in \R^d$ and assumes the from $U_s(x)=c \left( \frac{\lambda}{\lambda^2+|x-x_0|^2} \right)^{\frac{d+2s}{2}}$ for some positive constants $c$ and $\lambda$. Our results show that the $L^p$ norm of the stationary solutions plays a critical role on determining the global existence and finite time blow-up of solutions to \er{fracks}. Here $p=\frac{d(2-m)}{2s}=m$ and the $L^p$ norm produces a balance between the diffusion and the aggregation terms under the invariant scaling for equation \er{fracks}. Actually, $\|U_s\|_{L^p(\R^d)}$ is a constant only depending on $d,m$ and we prove that the $L^p$ norm of the stationary solutions $\|U_s\|_{L^p(\R^d)}$ is the sharp condition on the initial data separating infinite-time existence from finite-time blow-up. Precisely, in the case that the initial free energy $F(u_0)$ is less than $F(U_s)$ which is the free energy of the stationary solutions, there exists a global weak solution satisfying the free energy inequality when the initial data satisfies $\|u_0\|_{L^m(\R^d)}<\|U_s\|_{L^m(\R^d)}$ and the solution blows up at finite time provided by the condition $\|u_0\|_{L^m(\R^d)}>\|U_s\|_{L^m(\R^d)}$. For the critical case $\|u_0\|_{L^m(\R^d)}=\|U_s\|_{L^m(\R^d)},$ whether the solution exists globally in time or blows up in finite time is still unknown.


\end{document}